\tikzset{>=latex}
\newtheorem{theorem}{Theorem}[section]
\newtheorem{lemma}{Lemma}[section]
\newtheorem{proposition}{Proposition}[section]
\newtheorem{corollary}{Corollary}[section]
\author[M.C. Dourado \and M. Gutierrez \and F. Protti \and S. Tondato]{Mitre C. Dourado\affiliationmark{1}
%\ead{mitre@ic.ufrj.br}
\and Marisa Gutierrez\affiliationmark{2,4}
%\ead{marisa@mate.unlp.edu.ar}
\and F\'abio Protti\affiliationmark{3}
%\ead{fabio@ic.uff.br}
\and Silvia Tondato\affiliationmark{2}}
\title[Weakly toll convexity and proper interval graphs]{Weakly toll convexity and  proper interval graphs}
\affiliation{
  % one line per affiliation, no postal codes, grant numbers or similar
Instituto de Computa\c c\~ao, Universidade Federal do Rio de Janeiro, Brazil\\
CMaLP Facultad de Ciencias Exactas, Universidad Nacional de La Plata, Argentina\\
Instituto de Computa\c c\~ao, Universidade Federal Fluminense, Niter\'oi, RJ, Brazil\\
Conicet, Argentina }
\keywords{Convex Geometry, Convexity, Proper Interval Graph,
Weakly Toll Walk}
\begin{document}
%\publicationdetails{20}{2018}{2}{15}{3998}
\maketitle
\begin{abstract}
{\em Abstract.} A walk $u_0,u_1,\ldots,u_{k-1},u_k$ in a graph $G$
is a \textit{weakly toll walk} if $u_0u_i \in E(G)$ implies $u_i =
u_1$ and $u_ju_k\in E(G)$ implies $u_j=u_{k-1}$. A set $S$ of
vertices of $G$ is {\it weakly toll convex} if for any two
nonadjacent vertices $x,y \in S$, any vertex in a weakly toll walk
between $x$ and $y$ is also in $S$. The {\em weakly toll
convexity} is the graph convexity defined over weakly toll convex
sets. If $S$ and $S\setminus\{x\}$ are convex sets, then $x$ is an
{\em extreme vertex} of $S$. A graph convexity is said to be a
{\em convex geometry} if it satisfies the Minkowski-Krein-Milman
property, which states that every convex set is the convex hull of
its extreme vertices. It is known that chordal, Ptolemaic, weakly
polarizable, and interval graphs can be characterized as convex
geometries with respect to the monophonic, geodesic, $m^3$, and
toll convexities, respectively. Inspired by previous results
in~\citep*{lm}, in this paper we prove that a graph is a convex
geometry with respect to the weakly toll convexity if and only if
it is a proper interval graph. Furthermore, some well-known graph
invariants are studied with respect to the weakly toll convexity,
namely the weakly toll number and the weakly toll hull number. In
particular, we determine these invariants for trees and we find
bounds for interval graphs.
\end{abstract}

\section{Introduction}

This paper is motivated by the results and ideas contained
in~\citep*{lm}. We introduce a new graph convexity and show how
this gives rise to a new structural characterization of proper
interval graphs. We begin with an overview of convexity notions in
graphs. For an extensive overview of other convex structures,
see~\citep*{v}.

Let $\mathcal{C}$ be a collection of subsets (called convex sets)
of a finite set $X$. In abstract convexity theory, the following
axioms determine the pair $(X, \mathcal{C})$ as a convexity space:
(i) $\emptyset$ and $X$ are convex; (ii) the intersection of any
two convex sets is convex. Suppose that $X=V(G)$ for some graph
$G$. For a set $S \subseteq V(G)$, the smallest convex set
containing $S$ is called the \textit{convex hull} of $S$. A set $S
\subseteq V(G)$ is a \textit{hull set} if the convex hull of $S$
is $V(G)$. An element $x \in S$, where $S \subseteq V(G)$ is a
convex set, is called an \textit{extreme vertex} of $S$ if
$S\setminus\{x\}$ is also convex. A \textit{convex geometry} is a
pair formed by a graph $G$ and a convexity on $V(G)$ satisfying
the \textit{Minkowski-Krein-Milman property}~\citep*{km}:
\textit{Every convex set is the convex hull of its extreme
vertices.}

In the last few decades, graph convexity has been investigated in
many contexts~\citep*{fj,p,v}. In particular, some studies are
devoted to determine if a graph equipped with a convexity is a
convex geometry. Chordal, Ptolemaic, strongly chordal, interval,
and weakly polarizable graphs have been characterized as convex
geometries with respect to the monophonic~\citep*{du,fj},
geodesic~\citep*{fj}, strong~\citep*{fj}, toll~\citep*{lm}, and
$m^3$~\citep*{dnb} convexities, respectively. Other classes of
graphs that have been characterized as convex geometries are
forests, cographs, bipartite graphs, and planar graphs (see the
survey paper~\citep*{dpsa}). In addition, convex geometries
associated with the {\em Steiner convexity} and the {\em
$\ell^k$-convexity} (defined over induced paths of length at most
$k$) are studied in~\citep*{caceres} and~\citep*{gpt},
respectively.

The main result of this paper states that a graph is a convex
geometry with respect to the {\em weakly toll convexity} if and
only if it is a {\em proper interval graph}. In order to prove
this result we introduce the concept of {\em weakly toll walk}, a
walk with a special restriction on their end vertices. A walk
$u_0, u_1, \ldots, u_{k-1}, u_k$ is a \textit{weakly toll walk} if
$u_0u_i \in E(G)$ implies $u_i = u_1$ and $u_ju_k\in E(G)$ implies
$u_j=u_{k-1}$. Note that $u_1$ (or $u_{k-1}$) may appear more than
once in the walk. A set $S$ of vertices of $G$ is {\it weakly toll
convex} if for any two nonadjacent vertices $x,y \in S$, any
vertex in a weakly toll walk between $x$ and $y$ is also in $S$.
The weakly toll convexity is the graph convexity defined over
weakly toll convex sets.

The concept of weakly toll walk is a relaxation of the concept of
{\em tolled walk}, which was conceived to capture the structure of
the convex geometry associated with an interval graph. Likewise,
weakly toll walks are used as a tool to characterize proper
interval graphs as convex geometries.

The paper is organized as follows: in Section 2, we give some
definitions and necessary background. In Section 3, we prove that
proper interval graphs are precisely the convex geometries with
respect to the weakly toll convexity. In Section 4, we study some
invariants associated with the weakly toll convexity, namely the
weakly toll number and the weakly toll hull number. In particular,
we determine these invariants in trees and we find bounds in
arbitrary interval graphs. Section 5 contains a short conclusion.

\section{Preliminaries}

Let $G$ be an undirected graph without loops or multiple edges. If
$C$ is a subset of vertices of $G$, $G[C]$ denotes the subgraph of
$G$ induced by $C$. Let $xy\in E(G)$ and $z,w$ be two nonadjacent
vertices of $G$; the graph $G-xy+zw$ is obtained from $G$ by
deleting the edge $xy$ and adding the edge $zw$. For $S\subseteq
V(G)$, the graph $G'=G-S$ is defined as follows:
$V(G')=V(G)\setminus S$ and $E(G')=\{xy\in E(G) \mid \{x,y\}\cap
S=\emptyset\}$.

The {\em distance} between a pair of vertices $u$ and $v$ of $G$
is the length of a shortest path (or geodesic) between $u$ and $v$
in $G$, and is denoted by $d_G(u, v)$. The \textit{geodesic
interval} $I_G(u, v)$ between vertices $u$ and $v$ is the set of
all vertices that lie in some shortest path between $u$ and $v$ in
$G$; in other words, $I_G(u, v)=\{x \in V(G):d_G(u, x)+d_G(x,
v)=d_G(u,v)\}$. A subset $S$ of $V(G)$ is \textit{geodesically
convex} (or \textit{g-convex}) if $I_G(u, v)\subseteq S$ for all
$u, v \in S$. Similarly, define $J_G(u, v) = \{x \in V(G) : x \
\text{lies in an induced path between} \ u \ \text{and} \ v\}$ to
be the \textit{monophonic interval} between $u$ and $v$ in $G$. In
the associated monophonic convexity, a subset $S$ of $V(G)$ is
\textit{monophonically convex} (or \textit{m-convex}) if $J_G(u,
v) \subseteq S$ for all $u,v \in S$.

A \textit{tolled walk} is any walk $T:u_0,u_1,\ldots,u_{k-1},u_k$
such that $u_0$ is adjacent only to the second vertex of the walk,
and $u_k$ is adjacent only to the second-to-last vertex of the
walk. This implies that each of $u_1$ and $u_{k-1}$ occurs exactly
once in the walk.

Let $T_G(u, v) = \{x \in V(G) : x \ \text{lies in a tolled walk
between} \ u \ \text{and} \ v\}$ be the \textit{toll interval}
between $u$ and $v$ in $G$. In the associated toll convexity, a
subset $S$ of $V(G)$ is \textit{toll convex} (or
\textit{t-convex}) if $T_G(u, v) \subseteq S$ for all $u,v\in S$.

Next, we introduce the concept of weakly toll convexity. Let
$u,v\in V(G)$. A {\em weakly toll walk} between $u$ and $v$ in $G$
is a sequence of vertices of the form

$$\mathit{W} : u=w_0,w_1,\ldots,w_{k-1},v=w_k, \ k\geq 0,$$

\noindent such that if $k>0$, then
\begin{itemize}
\item $w_iw_{i+1} \in E(G)$ for all $i\in\{0,\ldots,k-1\}$, \item
$uw_i \in E(G)$ implies $w_i=w_1$, $i\in\{1,\ldots,k\}$, and \item
$w_iv \in E(G)$ implies $w_i=w_{k-1}$, $i\in\{0,\ldots,k-1\}$.
\end{itemize}

In other words, a weakly toll walk is any walk
$\mathit{W}:u,w_1,\ldots,w_{k-1},v$ between $u$ and $v$ such that
$u$ is adjacent only to the vertex $w_1$, which can appear more
than once in the walk, and $v$ is adjacent only to the vertex
$w_{k-1}$, which can appear more than once in the walk. Note that
if $uv \in E(G)$, then $\mathit{W}:u,v$ is a weakly toll walk, and
if $k=0$ then $\mathit{W}:u$ is a wealky toll walk consisting of a
single vertex. We define $\mathit{WT}_G(u, v) = \{x \in V(G) : x \
\text{lies in a weakly toll walk between} \ u \ \text{and} \ v\}$
to be the \textit{weakly toll interval} between $u$ and $v$ in
$G$. Finally, a subset $S$ of $V(G)$ is \textit{weakly toll
convex} if $\mathit{WT}_G(u, v) \subseteq S$ for all $u, v \in S$.

Note that any weakly toll convex set is also a toll convex set.
Also, any toll convex set is a monophonically convex set, and a
monophonically convex set is a geodesically convex set.

On the other hand, consider the graph $K_{1,3}$ with vertices
$a,b,c,d$, where $b$ is the vertex with degree three, and let
$S=\{a,b,c\}$. It is clear that $S$ is toll convex but not weakly
toll convex, since $a,b,d,b,c$ is a weakly toll walk between $a$
and $c$ that contains $d \notin S$.

The {\em weakly toll convex hull} of a set $S \subseteq V(G)$ is
the smallest set of vertices in $G$ that contains $S$ and is
weakly toll convex (alternatively, it is the intersection of all
weakly toll convex sets that contain $S$). In this particular
convexity, the concept of extreme vertex can also be defined. A
vertex $x$ of a weakly toll convex set $S$ of a graph $G$ is an
\textit{extreme vertex} of $S$ if $S\setminus \{x\}$ is also a
weakly toll convex set in $G$.

A graph is an \textit{interval graph} if it has an intersection
model (or \textit{interval model}) consisting of closed intervals
on a straight line. Given an interval ${\mathcal I}$, let
$R({\mathcal I})$ and $L({\mathcal I})$ be, respectively, the
right and left endpoints of ${\mathcal I}$. Given a family of
intervals $\{{\mathcal I}_v\}_{v \in V(G)}$, we say that
${\mathcal I}_a$ is an \textit{end interval} if $L({\mathcal
I}_a)=\mathit{Min}\bigcup {\mathcal I}_v$ or $R({\mathcal
I}_a)=\mathit{Max}\bigcup {\mathcal I}_v$. A given vertex $a$ in
an interval graph $G$ is an \textit{end vertex} if there exists
some interval model where $a$ is represented by an end interval.

A graph is \textit{chordal} if every cycle of length at least four
has a chord. A vertex $x$ of a graph $G$ is called
\textit{simplicial} if $N[x]$ is a clique in $G$, where $N[x]=\{u
\in V(G): ux \in E(G)\}\cup \{x\}$. The ordering $x_1,\ldots, x_n$
of the vertices of $G$ is a \textit{perfect elimination order} of
$G$ if for all $i$, $x_i$ is simplicial in
$G[\{x_i,\ldots,x_n\}]$.

\begin{theorem}{\em \citep*{d}}
A graph is chordal if and only if it has a perfect elimination
order.
\end{theorem}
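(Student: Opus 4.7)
The plan is to prove both implications separately, with the easier direction first.

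For the direction ``perfect elimination order (PEO) implies chordal,'' I would argue by contradiction. Suppose $x_1,\ldots,x_n$ is a PEO of $G$ and that $G$ contains an induced cycle $C = c_0 c_1 \ldots c_{\ell-1} c_0$ of length $\ell \geq 4$. Let $c_j$ be the vertex of $C$ with smallest index in the PEO, so that its two cycle-neighbors $c_{j-1}$ and $c_{j+1}$ (indices mod $\ell$) both lie in $G[\{c_j, c_{j+1}, \ldots, x_n\}] \supseteq G[\{x_j, x_{j+1}, \ldots, x_n\}]$ restricted to the cycle. Since $c_j$ is simplicial in the induced subgraph on $\{x_j,\ldots,x_n\}$, its two later neighbors $c_{j-1}$ and $c_{j+1}$ must be adjacent; but that gives a chord of $C$, contradicting that $C$ is induced.

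For the direction ``chordal implies PEO,'' I would proceed by induction on $n = |V(G)|$. The base case $n=1$ is trivial. For the inductive step, the key intermediate claim is that every chordal graph with at least two vertices has a simplicial vertex (indeed, a non-complete chordal graph has two non-adjacent simplicial vertices). Granting this claim, pick a simplicial vertex $x_1$; since chordality is hereditary, $G - x_1$ is chordal, and by induction has a PEO $x_2,\ldots,x_n$. Prepending $x_1$ yields a PEO of $G$.

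The main obstacle is proving the simplicial-vertex claim. The standard strategy, which I would follow, is to show that in a chordal graph every minimal vertex separator induces a clique, and then to argue by induction on $|V(G)|$: if $G$ is complete, every vertex is simplicial; otherwise, pick non-adjacent $a,b \in V(G)$ and a minimal $a$--$b$ separator $S$, and let $A$ and $B$ be the vertex sets of the connected components of $G - S$ containing $a$ and $b$, respectively. Apply induction to $G[A \cup S]$ and $G[B \cup S]$ (both chordal, both smaller than $G$), obtaining simplicial vertices in each. Since $S$ induces a clique, any simplicial vertex of $G[A\cup S]$ lying in $S$ would force $S \cup \{\text{one neighbor in }A\}$ to be a clique, which together with the minimality of $S$ leads to a contradiction; hence we get a simplicial vertex in $A$ (and likewise one in $B$), both of which are simplicial in $G$ because their neighborhoods lie entirely within $A \cup S$ (resp.\ $B \cup S$). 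This produces two non-adjacent simplicial vertices of $G$, completing the proof.

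The delicate step is the clique property of minimal separators: given a minimal $a$--$b$ separator $S$ and two vertices $u,v \in S$, one constructs an induced $u$--$v$ path through $A$ and another through $B$, and the shortness of these paths together with chordality forces $uv \in E(G)$; any chord across these two paths would contradict minimality of $S$ or induce a chordless cycle of length $\geq 4$. This is where the structural work really happens, and I would allot most of the proof to handling this carefully.
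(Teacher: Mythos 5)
The paper does not prove this statement at all: it is quoted as a known result with a citation to Dirac (1961), so there is no proof of record to compare against. Your argument is the standard textbook proof of Dirac's theorem and is essentially sound: the ``PEO implies chordal'' direction via the earliest cycle vertex in the order is correct (modulo a small notational slip where cycle indices and PEO indices get conflated in $G[\{c_j,\ldots,x_n\}]$), and the overall architecture of the converse (minimal separators are cliques, hence a simplicial vertex exists, hence induction) is right. One justification, however, is wrong as stated: you claim that a simplicial vertex of $G[A\cup S]$ lying in $S$ would contradict the minimality of $S$. It would not --- take $S=\{s\}$, $A=\{a\}$ with $G[A\cup S]$ a single edge; then $s$ is simplicial in $G[A\cup S]$ and nothing is contradicted. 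The correct way to locate a simplicial vertex inside $A$ is the one already implicit in your stronger inductive claim: if $G[A\cup S]$ is complete, every vertex of $A$ is simplicial in it; otherwise it has two non-adjacent simplicial vertices, and since $S$ is a clique at most one of them lies in $S$, so at least one lies in $A$. That vertex has all its $G$-neighbors in $A\cup S$ and is therefore simplicial in $G$. With that repair (which uses only material you have already set up), the proof is complete.
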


\citet*{lb} proved that a chordal graph is an interval graph if
and only if it contains no asteroidal triple. Three vertices of a
graph form an \textit{asteroidal triple} if for any two of them,
there exists a path between them that does not intersect the
closed neighborhood of the third.

\citet*{g} studied the end vertices of an interval graph:

\begin{theorem} {\em \citep*{g}}\label{tg}
A vertex $v$ of an interval graph $G$ is an end vertex if and only
if $G$ does not contain any of the graphs in Figure~\ref{grafica2}
as an induced subgraph with $v$ as the designated vertex.
\end{theorem}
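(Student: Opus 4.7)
The plan is to prove the statement by establishing both directions separately, using the standard clique-path characterization of interval graphs (a chordal graph $G$ is an interval graph if and only if its maximal cliques admit a linear ordering $C_1,\ldots,C_m$ such that, for each vertex $v$, the set of indices $i$ with $v\in C_i$ forms a contiguous range). Under this framework, $a$ is an end vertex of $G$ precisely when some clique path of $G$ can be chosen so that $a$ appears only in $C_1$ (or symmetrically only in $C_m$). The theorem is then the assertion that the obstructions to doing this are exactly the finitely many configurations shown in Figure~\ref{grafica1} with $a$ designated.

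For the necessity direction I would handle the forbidden subgraphs one by one. For each graph $H$ in Figure~\ref{grafica1}, with $a$ playing its designated role, I would show that in every interval model of $G$ the interval $I_a$ must have a neighbor strictly to its left and a neighbor strictly to its right, so $I_a$ cannot be an end interval. Typically this is done by exhibiting two vertices of $H$ whose intervals $I_x,I_y$ must occupy opposite sides of $I_a$ because of the adjacency pattern within $H$; the non-adjacencies in $H$ prevent any reorientation of the model that would place $a$ at the end.

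For the sufficiency direction I would argue contrapositively: assume $a$ is not an end vertex and produce one of the forbidden induced subgraphs with $a$ as designated vertex. Start with an arbitrary clique path $C_1,\ldots,C_m$ and attempt a sequence of local moves (relabeling the end-most cliques containing $a$, shortening the range of $a$, or swapping cliques at the end of the path) that progressively shrink the range of $a$ toward a single end clique. The hypothesis that $a$ is not an end vertex says that no such sequence succeeds, so at some point a minimal obstruction is reached. I would then analyze this obstruction by looking at vertices $b,c$ witnessing that $a$ has neighbors at both ends of its range, together with vertices certifying why $a$ cannot be swapped to either end, and verify that the induced subgraph on $\{a,b,c,\ldots\}$ is one of the graphs in Figure~\ref{grafica1}.

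The hard part, as is usual for forbidden-subgraph characterizations, is the sufficiency direction, and within it the completeness of the list: ensuring that the case analysis of minimal obstructions exhausts exactly the pictures in Figure~\ref{grafica1} and no others. I expect this to require a careful enumeration based on whether $a$ is simplicial, on the structure of $N(a)$ inside the first and last cliques containing $a$, and on the existence of asteroidal-triple-like configurations (using Lekkerkerker and Boland's characterization) that pin $a$ between two ``sides'' of the clique path.
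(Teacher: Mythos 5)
This statement is not proved in the paper at all: it is Gimbel's theorem, quoted verbatim from \citep*{g}, so there is no in-paper argument to compare against. Judged on its own terms, your proposal is a plan rather than a proof, and it has a genuine gap: the sufficiency direction (every non-end vertex forces one of the configurations of Figure~\ref{grafica1}) is exactly the content of the theorem, and you only gesture at it via an unspecified ``sequence of local moves'' on clique paths terminating in a ``minimal obstruction.'' You do not define the moves, prove termination, or carry out the case analysis showing that the minimal obstruction is one of $P_5$ with $a$ internal, ${\mathit star}_{1,2,2}$, $B_n$ $(n>5)$, or the $\mathit{bull}$ --- and the presence of the infinite family $B_n$ in the list is a warning that no short enumeration of ``small'' obstructions will do. The necessity direction is routine and your sketch of it is fine, but sufficiency is where the entire difficulty lives, and it is left undone.

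There is also an error in your framework that would propagate into any attempt to execute the plan. You assert that $a$ is an end vertex precisely when some clique path $C_1,\ldots,C_m$ has $a$ appearing \emph{only} in $C_1$ (or only in $C_m$). That is the correct criterion only when $a$ is simplicial, since then $N[a]$ is the unique maximal clique containing $a$. The theorem, however, concerns arbitrary vertices: the first forbidden configuration is a $P_5$ with $a$ as its middle (non-simplicial) vertex, and in a $P_3$ $u$--$a$--$v$ the vertex $a$ lies in both maximal cliques yet is an end vertex (take $I_a$ with leftmost left endpoint). The correct clique-path translation is that $a$ is an end vertex if and only if $a$ \emph{belongs to} an end clique of some consecutive arrangement of the maximal cliques; with your stronger condition, the contrapositive analysis in your sufficiency argument would start from a false hypothesis and could not yield the stated list.
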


A simplicial vertex $v$ of an interval graph $G$ is called
\textit{end simplicial vertex} if it is an end vertex of $G$.

In Figure~\ref{grafica2}, consider the graphs ${\mathit
star}_{1,2,2}$, $B_n \; (n> 5)$, and the bull graph. Note that $v$
is a simplicial vertex in any of such graphs. In addition,
consider the walks $v_1,v_2,v_3,v,v_3,v_4,v_5$ (in the graph
${\mathit star}_{1,2,2}$), $v_1,v_2,v,v_2,v_{n-1},v_n$ (in the
graph $B_n$), and $v_1,v_2,v,v_3,v_4$ (in the bull graph). Note
that these walks are weakly toll walks. Hence $v$ is not an
extreme vertex of any graph in Figure ~\ref{grafica2}.

\begin{figure}[ht]
\tikzstyle{miEstilo}= [thin, dotted]
\begin{center}
\begin{tikzpicture}[scale=0.7]
\node[draw,circle] (6) at (5,0) {$v_1$}; \node[draw,circle] (7) at
(7,0) {$v_2$}; \node[draw,circle] (8) at (9,0) {$v_3$};
\node[draw,circle] (9) at (11,0) {$v_4$}; \node[draw,circle] (10)
at (13,0) {$v_5$}; \node[draw,circle] (11) at (9,2) {$v$}; \draw
(6)--(7)--(8)--(9)--(10); \draw (8)--(11); \node (40) at (9,-1)
{${\mathit star}_{1,2,2}$}; \node[draw,circle] (12) at (5,-4)
{$v$}; \node[draw,circle] (13) at (7,-4) {$v_3$};
\node[draw,circle] (14) at (9,-4) {$v_4$}; \node[draw,circle] (15)
at (11,-4) {$v_5$}; \node[draw,circle] (16) at (13,-4)
{$v_{n-1}$}; \node[draw,circle] (17) at (15,-4) {$v_n$};
\node[draw,circle] (18) at (11,-2) {$v_2$}; \node[draw,circle]
(19) at (13,-2) {$v_1$}; \draw
(18)--(12)--(13)--(14)--(15)--(18)--(13); \draw
(19)--(18)--(16)--(17); \draw[miEstilo] (15)--(16); \draw
(18)--(14); \node (19) at (10,-5) {$B_n \; (n >5)$};
\node[draw,circle] (20) at (18,0) {$v_2$}; \node[draw,circle] (22)
at (21,0) {$v_3$}; \node[draw,circle] (21) at (18,-2) {$v_1$};
\node[draw,circle] (23) at (21,-2) {$v_4$}; \node[draw,circle]
(24) at (19.5,1) {$v$}; \node (25) at  (19.5, -2.5)
{$\mathit{bull}$}; \draw (21)--(20)--(22)--(23); \draw
(20)--(24)--(22); \node[draw,circle] (31) at (18,-4) {};
\node[draw,circle] (32) at (19,-4) {}; \node[draw,circle] (33) at
(20,-4) {$v$}; \node[draw,circle] (34) at (21,-4) {};
\node[draw,circle] (35) at (22,-4) {}; \draw
(31)--(32)--(33)--(34)--(35);
\end{tikzpicture}
\end{center}
\caption{Gimbel's graphs. \label{grafica2}}
\end{figure}
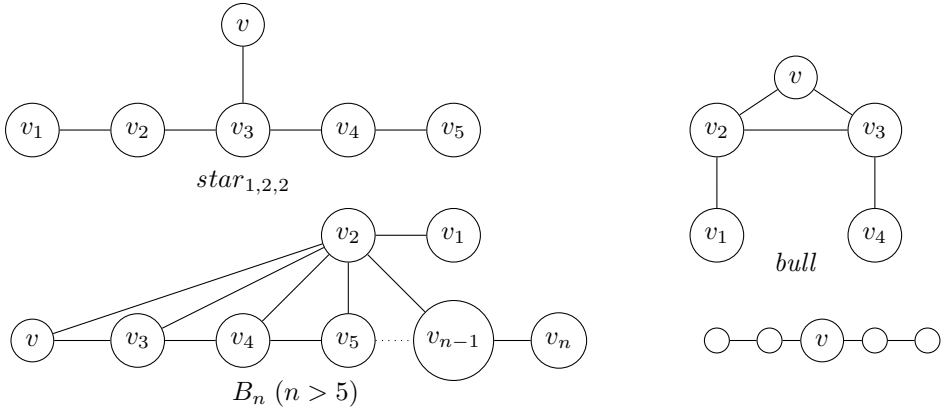

A \textit{proper interval graph} is an interval graph that has a
\textit{proper interval model}, i.e., an interval model in which
no interval strictly contains another. More precisely, there
exists a collection of intervals ${\mathscr I}=\{{\mathcal
I}_v\}_{v \in V(G)}$ such that:

\begin{enumerate}
\item $uv\in E(G)$ if and only if ${\mathcal I}_u \cap {\mathcal
I}_v\neq \emptyset$; \item ${\mathcal I}_v \nsubseteq {\mathcal
I}_u$ and ${\mathcal I}_u \nsubseteq {\mathcal I}_v$ if $u \neq
v$.
\end{enumerate}

\citet*{r} proved that proper interval graphs are exactly the
$K_{1,3}$-free interval graphs. Observe that $star_{1,2,2}$ and
$B_n$ (see Figure~\ref{grafica2}) are not proper interval graphs.
In addition, in a proper interval model of the graph $P_5$, the
end intervals are necessarily associated with the vertices of
degree one. Therefore, we have the following corollary:

\begin{corollary}\label{t1}
A vertex $v$ of a proper interval graph $G$ is an end vertex if
and only if $G$ does not contain any of the graphs in
Figure~\ref{grafica11} as an induced subgraph with $v$ as the
designated vertex.
\end{corollary}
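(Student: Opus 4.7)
The plan is to derive Corollary~\ref{t1} as a direct consequence of Gimbel's theorem (Theorem~\ref{tg}) combined with Roberts' characterization of proper interval graphs as the $K_{1,3}$-free interval graphs. The key observation is that Figure~\ref{grafica11} is precisely the sub-collection of Figure~\ref{grafica1} consisting of those Gimbel graphs that happen to be proper interval graphs; the remaining Gimbel graphs, namely ${\mathit star}_{1,2,2}$ and $B_n$ for $n > 5$, already contain an induced $K_{1,3}$ and hence cannot appear as induced subgraphs of any proper interval graph.

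For the forward direction, suppose $a$ is an end vertex of the proper interval graph $G$. Since every proper interval graph is an interval graph, Theorem~\ref{tg} applies: $G$ contains none of the graphs of Figure~\ref{grafica1} as an induced subgraph with $a$ as the designated vertex. In particular, $G$ contains none of the graphs of Figure~\ref{grafica11}, which is exactly the claim.

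For the converse, assume $G$ is a proper interval graph containing none of the graphs of Figure~\ref{grafica11} as an induced subgraph with $a$ as the designated vertex. To apply Theorem~\ref{tg} (which gives us that $a$ is an end vertex), I must rule out the remaining Gimbel graphs from Figure~\ref{grafica1}, i.e., ${\mathit star}_{1,2,2}$ and $B_n$ for $n > 5$. I would verify directly from the drawings in Figure~\ref{grafica2} that each of these contains an induced $K_{1,3}$: in ${\mathit star}_{1,2,2}$ the vertex $v_3$ has the three pairwise nonadjacent neighbors $\{v_2, v_4, a\}$, and in $B_n$ the vertex $v_2$ has the three pairwise nonadjacent neighbors $\{v_1, a, v_{n-1}\}$. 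By Roberts' theorem, a proper interval graph is $K_{1,3}$-free, so these graphs cannot occur as induced subgraphs of $G$.

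This structure means there is essentially no obstacle beyond the routine verification that the two excluded Gimbel graphs indeed contain $K_{1,3}$; everything else is bookkeeping with Theorem~\ref{tg} and Roberts' theorem. The corollary is therefore obtained immediately once Figure~\ref{grafica11} is identified as the intersection of Figure~\ref{grafica1} with the class of proper interval graphs.
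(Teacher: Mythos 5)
Your proposal is correct and follows exactly the route the paper intends: the corollary is stated right after the observation that, by Roberts' theorem, ${\mathit star}_{1,2,2}$ and $B_n$ $(n>5)$ contain an induced $K_{1,3}$ and hence cannot occur in a proper interval graph, so Gimbel's list in Theorem~\ref{tg} reduces to the two configurations of Figure~\ref{grafica11}. Your explicit identification of the claws (centered at $v_3$ in ${\mathit star}_{1,2,2}$ and at $v_2$ in $B_n$) is the only verification needed, and it is accurate.
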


\begin{figure}[ht]
\begin{center}
\begin{tikzpicture}[scale=0.5]
 \node[draw,circle] (1) at (-1,0) {$v_1$};
 \node[draw,circle] (2) at (1,0) {$v$};
 \node[draw,circle] (5) at (3,0) {$v_2$};
 \draw (1)--(2)--(5);
 \node[draw,circle] (20) at (13,0) {$v_2$};
 \node[draw,circle] (22) at (18,0) {$v_3$};
 \node[draw,circle] (21) at (13,-2) {$v_1$};
 \node[draw,circle] (23) at (18,-2) {$v_4$};
 \node[draw,circle] (24) at (15.5,1) {$v$};
 \node (25) at  (15.5, -3) {$\mathit{bull}$};
 \draw (21)--(20)--(22)--(23);
 \draw (20)--(24)--(22);
\end{tikzpicture}\end{center}
\caption{Forbidden configurations for an end vertex in proper
interval graphs.} \label{grafica11}
\end{figure}
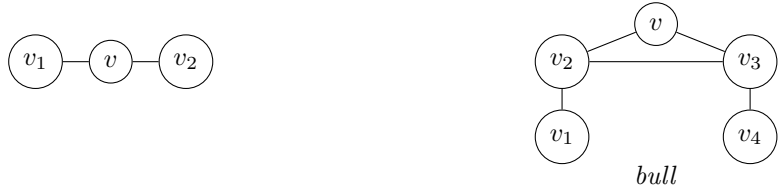

\if 10

\begin{figure}[ht]
\begin{center}
\begin{tikzpicture}[scale=0.5]
\node[draw,circle] (20) at (9,0) {$v_2$}; \node[draw,circle] (22)
at (13,0) {$v_3$}; \node[draw,circle] (21) at (9,-2) {$v_1$};
\node[draw,circle] (23) at (13,-2) {$v_4$}; \node[draw,circle]
(24) at (11.5,1) {$v$}; \node (25) at  (11.5, -3) {$bull$}; \draw
(21)--(20)--(22)--(23); \draw (20)--(24)--(22);
\end{tikzpicture}\end{center}
\caption{Forbidden configuration for an end simplicial vertex in
proper interval graphs.}\label{grafica12}
\end{figure}

\fi

In order to determine whether a graph is a convex geometry with
respect to some convexity, it is important to know which vertices
are the extreme vertices of convex sets in that convexity. As said
above, chordal graphs are associated with the monophonic
convexity, and in this case the extreme vertices are exactly the
simplicial vertices~\citep*{fj}. In the case of interval graphs,
associated with the toll convexity, the extreme vertices are the
end simplicial vertices~\citep*{lm}.

\begin{theorem}{\em \citep*{lm}} \label{ti}
A graph is a convex geometry with respect to the toll convexity if
and only if it is an interval graph.
\end{theorem}

\begin{lemma} \label{igw} Let $G$ be an interval graph. Every vertex of $G$ which is not an end simplicial vertex lies in a weakly toll walk between two end simplicial vertices.
\end{lemma}

\begin{proof} Since $G$ is an interval graph, by Theorem \ref{ti}, $G$ is a convex geometry with respect to the toll convexity. This means that every vertex of $G$ that is not an end simplicial vertex lies in a tolled walk between two end simplicial vertices. As every tolled walk is a weakly toll walk, the lemma follows.
\end{proof}

\begin{lemma}\label{1}
Let $C$ be a weakly toll convex set of a graph G. If $x$ is an
extreme vertex of $C$, then $x$ is a simplicial vertex in $G[C]$.
\end{lemma}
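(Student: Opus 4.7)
The plan is to argue by contradiction. Suppose $x$ is extreme in $C$ but fails to be simplicial in $G[C]$; I will produce a weakly toll walk between two nonadjacent vertices of $C\setminus\{x\}$ that passes through $x$, contradicting the weak toll convexity of $C\setminus\{x\}$.

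First I would dispose of the trivial cases. If $|N(x)\cap C|\le 1$ then $N[x]\cap C$ is already a clique, so $x$ is simplicial in $G[C]$. Hence we may assume $x$ has at least two neighbors in $C$, and non-simpliciality in $G[C]$ yields vertices $y,z\in N(x)\cap C$ with $yz\notin E(G)$.

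The key step is the observation that the length-two walk $W\colon y,x,z$ is a weakly toll walk between $y$ and $z$. Indeed, writing $W=u_0w_1v$ with $u_0=y$, $w_1=x$, $v=z$, we have $k=1$, $yx\in E(G)$ and $zx\in E(G)$, while the two conditions ``for $i\ne 1$, $yw_i\in E(G)$ iff $w_i=w_1$'' and ``for $i\ne k$, $zw_i\in E(G)$ iff $w_i=w_k$'' are vacuous because the only interior index is $1=k$. Thus $x\in \mathit{WT}_G(y,z)$.

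Now $y,z\in C\setminus\{x\}$ are nonadjacent, and by hypothesis $C\setminus\{x\}$ is weakly toll convex; therefore $\mathit{WT}_G(y,z)\subseteq C\setminus\{x\}$, forcing $x\in C\setminus\{x\}$, a contradiction. Hence every pair of neighbors of $x$ in $C$ must be adjacent, so $x$ is simplicial in $G[C]$. I do not anticipate a real obstacle here: the only thing to be slightly careful about is verifying that the two-step walk meets the weakly toll walk definition in the degenerate case $k=1$, which is what makes the argument go through cleanly.
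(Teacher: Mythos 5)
Your proof is correct and follows essentially the same route as the paper's: take two nonadjacent neighbors $u,v$ of $x$ in $C$ and observe that $u,x,v$ is a weakly toll walk, so $C\setminus\{x\}$ fails to be weakly toll convex. Your extra verification that the two-step walk satisfies the definition in the degenerate case $k=1$ is a welcome bit of care, but the argument is the same.
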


\begin{proof}
Suppose that $x$ is an extreme vertex of $C$ that is not
simplicial in $G[C]$. Then there exist two neighbors of $x$ in
$C$, say $u$ and $v$, which are not adjacent. But then $u, x, v$
is a weakly toll walk in $G$. Hence $C\setminus \{x\}$ is not a
weakly toll convex set, which is the desired contradiction.
\end{proof}

\begin{lemma}\label{l3}
A vertex $v$ of a proper interval graph $G$ is an extreme vertex
of the weakly toll convex set $V(G)$ if and only if $v$ is an end
simplicial vertex of $G$.
\end{lemma}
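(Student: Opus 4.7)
The plan is to prove both directions of the equivalence separately. For the forward implication that an extreme $a$ must be end simplicial, Lemma \ref{1} already gives simpliciality, so I only need the end-vertex property. Arguing by contrapositive, if $a$ were simplicial but not an end vertex, Corollary \ref{t1} would force $G$ to contain one of the configurations of Figure \ref{grafica11} with $a$ as the designated vertex; the $P_5$-with-$a$-at-the-middle is ruled out by simpliciality, leaving an induced bull with $a$ at the top. In such a bull $\{a, v_1, v_2, v_3, v_4\}$, a direct check shows that $v_1, v_2, a, v_3, v_4$ is a weakly toll walk, so $a \in \mathit{WT}_G(v_1, v_4)$ with $v_1, v_4 \in V(G)\setminus\{a\}$ nonadjacent, contradicting extremality of $a$.

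For the backward implication, I assume $a$ is end simplicial and suppose for contradiction that some weakly toll walk $W : u, w_1, \ldots, w_k, v$ contains $a = w_i$ for nonadjacent $u, v \in V(G) \setminus \{a\}$, taking $W$ to be of minimum length. First, $u, v \notin N[a]$: if $u \in N(a)$, the toll condition at $u$ forces $a = w_1$, and then simpliciality of $a$ makes $u w_2$ both an edge (since $w_2 \in N(a)$ and $N[a]$ is a clique) and a non-edge (by the toll condition). Standard shortcut arguments --- any two walk vertices both lying in the clique $N[a]$ can be joined via $a$, producing a shorter walk that still contains $a$ --- then yield that $a$ appears only at $w_i$, that $w_{i-1}, w_{i+1} \in N(a)$, and that $w_j \notin N[a]$ for every other $j$. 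I then split on the pair $(w_{i-1}, w_{i+1})$. If $w_{i-1} = w_{i+1}$, minimality makes this common vertex equal $w_1$ (so $i = 2$) or $w_k$ (so $i = k-1$); inspecting $w_4$ (or the symmetric $w_{k-3}$) either yields a further shortcut or an induced $K_{1,3}$ centred at that vertex, contradicting $K_{1,3}$-freeness of proper interval graphs.

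If instead $y_1 := w_{i-1} \neq y_2 := w_{i+1}$, the clique $N(a)$ gives the triangle $\{y_1, a, y_2\}$. Writing $\alpha \in \{u, w_{i-2}\}$ and $\beta \in \{v, w_{i+2}\}$ for the walk vertices adjacent to $y_1, y_2$ on the outer side, I examine the induced subgraph on $\{\alpha, y_1, a, y_2, \beta\}$ according to which of the three ``extra'' edges $\alpha y_2$, $\beta y_1$, $\alpha \beta$ happen to be present. The possible outcomes include an induced bull with $a$ at the top (contradicting Corollary \ref{t1}), an induced $K_{1,3}$ (contradicting $K_{1,3}$-freeness), or an induced $C_4$ on $\{\alpha, y_1, y_2, \beta\}$ (contradicting chordality of interval graphs). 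The hard part will be the sub-case $3 \leq i \leq k - 2$ in which all three extra edges are simultaneously present, since then no five-vertex induced subgraph is immediately forbidden; there I plan to exhibit $\{u, v, a\}$ as an asteroidal triple of $G$, with the $u$-$v$ path avoiding $N[a]$ obtained by routing through $w_{i-2} w_{i+2}$ and the other two paths built from the halves of $W$ (the degenerate possibility $w_1 = w_k$ must be handled separately, but there $\{u, v, w_1, y_1\}$ induces a $K_{1,3}$ centred at $w_1$). The asteroidal triple contradicts the Lekkerkerker--Boland characterization of interval graphs quoted in Section 2, closing the proof.
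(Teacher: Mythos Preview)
Your forward direction is essentially the paper's argument.

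For the backward direction, the paper takes a completely different and much shorter route: it works directly with a proper interval model $\{I_v\}$ in which $I_a$ is the leftmost interval. After showing (as you do) that neither endpoint of the walk is adjacent to $a$, one may label the endpoints $x,y$ so that $I_a,I_x,I_y$ occur in this order. The walk must pass from $x$ across to $a$ (to the left of $I_x$) and then across to $y$ (to the right of $I_x$); since $w_1$ is the \emph{only} internal vertex adjacent to $x$, both crossings force $L(I_{w_1})<L(I_x)$ and $R(I_{w_1})>R(I_x)$, i.e.\ $I_x\subsetneq I_{w_1}$, contradicting properness. That is the entire argument.

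Your forbidden-subgraph case analysis is in principle a legitimate alternative, but as written it has real gaps. First, in the case $w_{i-1}=w_{i+1}$ you assert that ``minimality makes this common vertex equal $w_1$ or $w_k$''; I do not see why. Removing the detour $z,a,z$ produces a shorter walk, but it no longer contains $a$, so it does not contradict minimality among walks \emph{through} $a$. (The contradiction in that sub-case actually comes from $K_{1,3}$-freeness applied at the vertex $z$, whose neighbours $w_{i-2},a,w_{i+2}$ are pairwise nonadjacent --- but then you must also deal with $w_{i-2}=w_{i+2}$.) Second, in your ``hard'' sub-case you want $\{u,v,a\}$ to be an asteroidal triple, using the first half of $W$ as a $u$--$a$ path avoiding $N[v]$; however nothing so far prevents $w_j=w_k$ for some $j<i-1$, in which case that half of $W$ meets $N(v)$. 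Minimality only rules out $w_j=w_k$ for $j>i$. Third, you never exclude $\alpha=\beta$ (i.e.\ $w_{i-2}=w_{i+2}$), where the five-vertex subgraph collapses to a diamond and yields no forbidden configuration.

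These holes are patchable, but each patch spawns further sub-cases, and you are effectively re-deriving the interval model combinatorially. I would recommend switching to the paper's model-based argument: it replaces all of this with a two-line containment contradiction.
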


\begin{proof}
Suppose that $v$ is an extreme vertex that is not an end
simplicial vertex. Since $v$ is an extreme vertex, no induced path
contains $v$ as an internal vertex; hence, $v$ is a simplicial
vertex so that $v$ is not an end vertex. Using Corollary~\ref{t1}
and the observations after Theorem~\ref{tg}, $v$ is not an extreme
vertex, which is a contradiction.

Conversely, assume, in order to obtain a contradiction, that there
exists a weakly toll walk $\mathit{W}$ between two nonadjacent
vertices $x$ and $y$ of $G$ containing an end simplicial vertex
$v$ as an internal vertex. Write $\mathit{W}:
x,w_1,\ldots,w_i,v,w_{i+2},\ldots,w_n,y$. Since $v$ is an end
simplicial vertex, we can assume that there exists a proper
interval model $\{{\mathcal I}_u\}_{u \in V(G)}$ such that
${\mathcal I}_v$ appears as the first interval on the line. First,
we will show that $x$ is not adjacent to $v$. Suppose that $x$ is
adjacent to $v$. As $\mathit{W}$ is a weakly toll walk, $w_1=v$.
Moreover, since $v$ is a simplicial vertex, $w_i$ and $w_{i+2}$
are adjacent to $x$. Thus $w_i=w_{i+2}=v$. Clearly $w_k=v$ for $k
\in \{1,\ldots,n\}$. Then $\mathit{W}$ is the walk $x, v, y$.
Since $v$ is a simplicial vertex and $x, y$ are adjacent to $v$,
we have that $x$ is adjacent to $y$, which is a contradiction.

Since $x$ and $y$ are not adjacent to $v$, and $x$ is not adjacent
to $y$, we have ${\mathcal I}_v \cap {\mathcal I}_x=\emptyset$,
${\mathcal I}_x \cap {\mathcal I}_y=\emptyset$, and ${\mathcal
I}_v \cap {\mathcal I}_y=\emptyset$. Thus, we can assume that
${\mathcal I}_v$, ${\mathcal I}_x$, and ${\mathcal I}_y$ appear in
this order on the line.

On the other hand, the weakly toll walk $\mathit{W}:
x,w_1,\ldots,w_i,v,w_{i+2},\ldots,w_n,y$ goes from $x$ to $v$
through $w_1$, $w_1$ is the only vertex adjacent to $x$, and $G$
is a proper interval graph. Thus $L({\mathcal I}_x) \in {\mathcal
I}_{w_1}$ and $L({\mathcal I}_{w_1})<L({\mathcal I}_x)$. Also, the
weakly toll walk $\mathit{W}:
x,w_1,\ldots,w_i,v,w_{i+2},\ldots,w_n,y$ goes from $v$ to $y$
through $w_1$ because the only vertex adjacent to $x$ is $w_1$.
Thus $R({\mathcal I}_x) \in {\mathcal I}_{w_1}$ and $R({\mathcal
I}_x)<R({\mathcal I}_{w_1})$. Hence, we have ${\mathcal I}_x
\subsetneqq {\mathcal I}_{w_1}$, which is a contradiction because
$\{{\mathcal I}_u\}_{u\in V(G)}$ is a proper interval model of
$G$.
\end{proof}

\begin{lemma}\label{2}\hspace*{1cm}

\noindent $1.$ Let $x$, $y$ and $z$ be vertices forming an
asteroidal triple in a graph $G$. If $C$ is the weakly toll convex
hull of the set $\{x,y,z\}$, then $C$ does not have extreme
vertices.

\noindent $2.$ Let $a$, $b$, $c$, and $d$ be vertices inducing a
$K_{1,3}$ in a graph $G$. If $C$ is the weakly toll convex hull of
the set $\{a,b,c,d\}$, then $C$ does not have extreme vertices.

\end{lemma}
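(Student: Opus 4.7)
My plan splits both parts with the same reduction and then provides explicit weakly toll walk witnesses for the generating vertices. The reduction: if $v \in C$ lies outside the generating set (namely $\{x,y,z\}$ in Part~1 or $\{a,b,c,d\}$ in Part~2), then $C \setminus \{v\}$ still contains the generating set, so were it weakly toll convex, the minimality of the hull would force $C \subseteq C \setminus \{v\}$, a contradiction. Thus the task reduces in both parts to showing that no generating vertex is extreme in $C$.

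For Part~1, I use that asteroidal triples are pairwise non-adjacent (otherwise the required avoidance path for one pair could not exist in $G - N[r]$). For each pair $\{p,q\} \subset \{x,y,z\}$ I fix a shortest path $P_{pq}$ in $G$ from $p$ to $q$ avoiding $N[r]$ (the closed neighborhood of the third vertex); such a path exists by the asteroidal property and is induced because it is shortest. To show, say, that $x$ is not extreme, I concatenate $P_{xy}$ (traversed from $y$ to $x$) with $P_{xz}$ (traversed from $x$ to $z$) to obtain a walk $W$ through $x$ between the non-adjacent pair $y,z$. The weakly toll endpoint conditions hold essentially for free: along $P_{xy}$ the vertex $y$ has a single neighbor (the path is induced), and no vertex of $P_{xz}$ is adjacent to $y$ (because $P_{xz}$ avoids $N[y]$); a symmetric argument handles $z$. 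Hence $x \in \mathit{WT}_G(y,z)$ while $y,z \in C \setminus \{x\}$, so $C \setminus \{x\}$ is not weakly toll convex. The cases of $y$ and $z$ follow by relabeling.

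For Part~2, let $a$ be the center of the $K_{1,3}$ and $b,c,d$ the leaves, so $ab, ac, ad \in E(G)$ and $bc, bd, cd \notin E(G)$. The walk $b,a,c$ is trivially a weakly toll walk between the non-adjacent pair $b,c$ passing through $a$, so $a$ is not extreme. For a leaf, say $b$, I exhibit the walk $c,a,b,a,d$ between the non-adjacent pair $c,d$: its internal sequence is $w_1=a,\ w_2=b,\ w_3=a$ (so $k=3$); the endpoints $c$ and $d$ are adjacent only to the occurrences of $a$ among the internal vertices, and each such occurrence coincides with $w_1$ (from $c$'s side) and with $w_k$ (from $d$'s side), so both weakly toll conditions are verified. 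Relabeling handles $c$ and $d$. The delicate point—and the place where the ``weakly'' relaxation is used in an essential way—is precisely this last walk: the repeated occurrence of $a$ is what allows the leaf $b$ to be realized as an internal vertex, in contrast to the stricter tolled setting, matching the paper's earlier remark that $K_{1,3}$ is toll convex but not weakly toll convex.
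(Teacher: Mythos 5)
Your proof is correct, and it is more self-contained than the paper's. The paper essentially disposes of this lemma in the remarks preceding it: for the asteroidal triple it invokes the known fact (from the toll convexity paper of Alc\'on et al.) that the three AT vertices are non-extreme for the \emph{toll} convexity, together with the observation that every tolled walk is a weakly toll walk; for $K_{1,3}$ it uses exactly the walk you exhibit (the paper writes it as $a,b,d,b,c$ with the center labelled $b$). You instead reconstruct the AT argument from scratch: the concatenation of the two induced avoidance paths $P_{xy}$ and $P_{xz}$ at $x$ is a valid witness, since each endpoint sees only its path-neighbour among the internal vertices (the other path avoids its closed neighbourhood), and in fact this walk is even a tolled walk. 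Two things you do that the paper leaves implicit are worth keeping: the observation that asteroidal triples are pairwise non-adjacent (needed so that the walk's endpoints qualify), and the reduction showing that no vertex of $C$ \emph{outside} the generating set can be extreme, via minimality of the hull --- the paper states the conclusion for all of $C$ without spelling this out. One small wording caveat: your ``shortest path in $G$ from $p$ to $q$ avoiding $N[r]$'' should be read as a shortest $p$--$q$ path in $G - N[r]$; such a path is chordless in $G - N[r]$ and hence induced in $G$, which is what your adjacency analysis needs.
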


\begin{proof} \hspace*{1cm}

\begin{enumerate}

\item \label{item1} Let $w$ be a vertex of $C$, different from
$x$, $y$, and $z$. Assume, in order to obtain a contradiction,
that $w$ is an extreme vertex in $C$. Thus $C\setminus \{w\}$ is a
weakly toll convex set. Also $C\setminus \{w\}$ contains $x$, $y$,
and $z$. This is a contradiction.

Now, we will show that no vertex of the set $\{x,y,z\}$ is an
extreme vertex. Let $P$ and $Q$ be the induced paths between $x,y$
(avoiding neighbors of $z$), and $y,z$ (avoiding neighbors of
$x$), respectively. By concatenating $P$ and $Q$, we obtain a
weakly toll walk  between $x$ and $z$ containing $y$ (observe that
no vertex of $Q$ can be adjacent to $x$, and no vertex of $P$ can
be adjacent to $z$). Thus, there is a weakly toll walk between two
vertices of the asteroidal triple containing the other vertex of
the asteroidal triple. Hence no vertex forming the asteroidal
triple is an extreme vertex in $C$.

\item As in the proof of~\ref{item1}, if $w$ is a vertex of $C$,
different from $a$, $b$, $c$,  and $d$, then $w$ is not an extreme
vertex.

Let $a,b,c,d$ be the vertices of a $K_{1,3}$ such that $b$ has
degree three. Since $a,b,c,b,d$ is a weakly toll walk between $a$
and $d$, $b$ and $c$ are not extreme vertices of $K_{1,3}$.
Likewise, we prove that $a$ and $d$ are not extreme vertices of
$K_{1,3}$.

\end{enumerate}

\end{proof}

\section{Proper interval graphs as convex geometries}

Recall that if a graph is a convex geometry with respect to a
particular convexity, then $V(G)$ and every convex subset of $G$
is the convex hull of its extreme vertices with respect to that
convexity.

\begin{theorem}\label{t31}
A graph $G$ is a convex geometry with respect to the weakly toll
convexity if and only if $G$ is a proper interval graph.
\end{theorem}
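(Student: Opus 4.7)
The plan is to prove both directions of the equivalence.

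Necessity. I would argue by contrapositive. A graph is a proper interval graph if and only if it is chordal, asteroidal-triple-free, and $K_{1,3}$-free, so if $G$ is not a proper interval graph, $G$ contains an induced $K_{1,3}$, an asteroidal triple, or an induced cycle $C_n$ with $n \geq 4$. The first two configurations are handled by Lemma~\ref{2}: the weakly toll convex hull of the configuration has no extreme vertex. For $C_n$ with $n \geq 6$ the cycle itself contains an asteroidal triple, reducing to the previous case; for $C_4$ and $C_5$, Lemma~\ref{1} combined with the remark preceding Lemma~\ref{2} shows directly that each cycle vertex has two non-adjacent cycle neighbors and hence is non-simplicial (and so non-extreme) in the hull. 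In every case MKM fails.

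Sufficiency. Let $C$ be an arbitrary weakly toll convex set of a proper interval graph $G$. The crucial preliminary observation is that since $C$ is weakly toll convex, every weakly toll walk in $G$ between two vertices of $C$ stays entirely inside $C$ and is therefore also a weakly toll walk in $G[C]$. Hulls and extreme-vertex operators on subsets of $C$ thus agree whether computed in $G$ or in the induced subgraph $G[C]$, which is itself a proper interval graph (properness is hereditary). Consequently it suffices to prove the statement for $C = V(G)$, i.e.\ $V(G) = \mathrm{hull}(\mathrm{ex}(V(G)))$. By Lemma~\ref{l3}, the extreme vertices coincide with the end simplicial vertices; disconnected $G$ reduces to its components, and complete $G$ is trivial.

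In the remaining case of a connected non-complete $G$, fix a proper interval representation with vertices $v_1,\dots,v_n$ ordered from left to right, so that $v_1, v_n$ are end simplicial and non-adjacent. Set $j=\max\{k:v_1v_k\in E\}$ and $m=\min\{k:v_kv_n\in E\}$. When $j<m$, the walk $v_1,v_j,v_{j+1},\dots,v_{m-1},v_m,v_n$ is weakly toll by maximality of $j$ and minimality of $m$ (no internal vertex except $v_j$ is adjacent to $v_1$, and symmetrically for $v_n$), placing every vertex $v_i$ with $j\le i\le m$ in $\mathrm{WT}_G(v_1,v_n)\subseteq \mathrm{hull}(\mathrm{ex}(V(G)))$. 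When $j\ge m$, the three-vertex walk $v_1,v_k,v_n$ for any common neighbor $v_k$ is weakly toll and covers $v_k$.

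The main obstacle is to cover the remaining vertices, namely those in $N[v_1]\cup N[v_n]$ that are not themselves end simplicial. For such $v_i\in N(v_1)$, I claim $v_i$ is not simplicial in $G$: if $v_i$ were simplicial, $N(v_i)$ would be a clique containing $v_1$, so every neighbor of $v_i$ would lie in $N[v_1]$, and by a suitable permutation of the proper interval model (using Corollary~\ref{t1} to rule out forbidden configurations) $v_i$ could be placed as the leftmost interval, making $v_i$ an end vertex and hence end simplicial, contradicting our assumption. Therefore $v_i$ has a neighbor outside $N[v_1]$, and properness of the representation then forces $v_iv_{j+1}\in E$; the walk $v_1,v_i,v_{j+1},\dots,v_m,v_n$ (or the four-vertex walk $v_1,v_i,v_{j+1},v_n$ when $j\ge m$) is weakly toll and passes through $v_i$. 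A symmetric argument handles $v_i\in N(v_n)$. Consequently every vertex of $V(G)$ lies in $\mathrm{hull}(\mathrm{ex}(V(G)))$; the reverse inclusion is immediate.
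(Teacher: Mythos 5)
Your proof is correct in substance but follows a genuinely different route from the paper in both directions. For necessity, the paper argues by induction on $|V(G)|$: it removes an extreme vertex, invokes the induction hypothesis to get that $G-x$ is proper interval (hence chordal), uses Lemma~\ref{1} to conclude chordality of $G$, and only then applies Lemma~\ref{2} to exclude asteroidal triples and induced $K_{1,3}$'s; you instead argue by contrapositive through the forbidden-structure characterization (chordal $+$ AT-free $+$ claw-free), handling induced cycles $C_4,C_5$ directly via Lemma~\ref{1} and longer cycles via asteroidal triples, which avoids the induction altogether and is arguably cleaner. For sufficiency, the paper simply quotes the fact (from the toll-convexity paper) that in an interval graph every non--end-simplicial vertex lies on a tolled walk between two end simplicial vertices, combines it with Lemma~\ref{l3}, and stops --- notably it never addresses proper convex subsets $C\subsetneq V(G)$; you make this relativization explicit (weakly toll convexity restricts correctly to induced subgraphs of convex sets, and properness is hereditary), which is a step the paper leaves implicit, and you replace the citation by a self-contained construction of weakly toll walks from the indifference ordering $v_1,\dots,v_n$. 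Your construction checks out: consecutivity of neighborhoods gives the walk $v_1,v_j,\dots,v_m,v_n$, and the umbrella property gives $v_iv_{j+1}\in E(G)$ for the leftover neighbors of $v_1$.

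Two spots deserve tightening. First, your claim that a simplicial vertex $v_i\in N(v_1)$ must be end simplicial is true but the justification (``a suitable permutation of the model, using Corollary~\ref{t1}'') is waved at rather than proved; the quickest fix is to note that $v_1$, being leftmost, is itself simplicial, so $N[v_1]$ is a clique containing $v_i$, giving $N[v_1]\subseteq N[v_i]$, while simpliciality of $v_i$ gives $N[v_i]\subseteq N[v_1]$; hence $v_i$ and $v_1$ are twins and swapping their intervals makes $v_i$ an end vertex (alternatively, rule out the $\mathit{bull}$ apex configuration of Corollary~\ref{t1} directly). Second, in the case $j\ge m$ your four-vertex walk $v_1,v_i,v_{j+1},v_n$ degenerates when $j+1=n$, but then $v_i$ is a common neighbor of $v_1$ and $v_n$ and is already covered by the three-vertex walk, so this is only a matter of wording. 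With these small repairs the argument is complete.
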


\begin{proof}
Let $G$ be a convex geometry w.r.t. the weakly toll convexity. We
will show that $G$ is a proper interval graph, using induction on
the number of vertices of $G$. The claim is true if $G$ has one or
two vertices. Assume that $G$ has $n$ vertices and that the claim
is true for all graphs with fewer than $n$ vertices. Let $x$ be
any extreme vertex of $V(G)$. Clearly $G-x$ is a convex geometry.
By the induction hypothesis, $G-x$ is a proper interval graph. In
particular, $G-x$ is a chordal graph. By Lemma~\ref{1}, $x$ is a
simplicial vertex in $G$; thus, $G$ is chordal. If $G$ has an
asteroidal triple, then, by Lemma~\ref{2}, the weakly toll convex
hull $C$ of the asteroidal triple has no extreme vertices. This
implies that $C$ is not the weakly toll convex hull of its extreme
vertices, which is a contradiction. Thus $G$ is a chordal graph
with no asteroidal triple, and this implies that it is an interval
graph. Now, we claim that $G$ does not contain $K_{1,3}$ as an
induced subgraph. In order to obtain a contradiction, suppose that
$G$ contains $G'=K_{1,3}$ as an induced subgraph. Let $C$ be the
weakly toll convex hull of the set $V(G')$. It is clear, using
Lemma~\ref{2}, that $C$ has no extreme vertices. This implies that
$C$ is not the weakly toll convex hull of its extreme vertices,
which is a contradiction. Hence, $G$ is a proper interval graph.

Conversely, every convex subset of a proper interval graph $G$
induces a proper interval graph. Thus it suffices to show that
$V(G)$ is the convex hull of its extreme vertices. Since $G$ is a
proper interval graph, then it is an interval graph. By
Lemma~\ref{igw}, every vertex of $G$ that is not an end simplicial
vertex lies in a wealky toll walk between two end simplicial
vertices. Recall that, by Lemma~\ref{l3}, the set of end
simplicial vertices is equal to the set of extreme vertices in the
weakly toll convexity if $G$ is a proper interval graph. Hence
every vertex of $G$ that is not an extreme vertex lies in a weakly
toll walk between two extreme vertices. This implies that $G$ is a
convex geometry.
\end{proof}

\section{Some invariants associated with the weakly toll convexity}

In this section, we consider some standard invariants with respect
to the weakly toll convexity that have been extensively studied
for other graph convexities. We consider the weakly toll number
and the weakly toll hull number of a graph.

The definition of weakly toll interval for two vertices $u$ and
$v$ can be generalized to an arbitrary subset $S$ of $V(G)$ as
follows:
$$\mathit{WT}_G(S)=\bigcup_{u,v\in S} \mathit{WT}_G(u, v).$$
If $\mathit{WT}_G(S) = V(G)$, $S$ is called a \textit{weakly toll
set} of $G$. The order of a minimum weakly toll set in $G$ is
called the {\em weakly toll number} of $G$, and is denoted by
$\mathit{wtn}(G)$. For any non-trivial connected graph $G$, it is
clear that $2\leq \mathit{wtn}(G)\leq n$.

As mentioned earlier, the weakly toll convex hull of a set
$S\subseteq V(G)$ is defined as the intersection of all weakly
toll convex sets that contain $S$, and we will denote this set by
$[S]_{\mathit{WT}}$. A set $S$ is a \textit{weakly toll hull set}
of $G$ if $[S]_{{\mathit WT}}=V(G)$. The \textit{weakly toll hull
number} of $G$, denoted by $\mathit{wth}(G)$, is the minimum among
all the cardinalities of weakly toll hull sets.

Given a set $S \subseteq V(G)$, define $\mathit{WT}^k(S)$ as
follows: $\mathit{WT}^0(S) = S$ and $\mathit{WT}^{k}(S) =
\mathit{WT} (\mathit{WT}^{k-1}(S))$ for $k\geq 1$. Note that
$[S]_{{\mathit WT}} = \bigcup_{k\in \mathbb{N}} \mathit{WT}^k(S)$.
From the definitions, we immediately infer that every weakly toll
set is a weakly toll hull set, and hence $\mathit{wth}(G)\leq
\mathit{wtn}(G)$.

By the proof of Theorem~\ref{t31}, the weakly toll number (as well
as the weakly toll hull number) of a proper interval graph
coincides with the number of its extreme vertices. Indeed,
%by Lemma 2.3 every extreme vertex of $V(G)$, where $G$ is a proper interval graph, is simplicial and hence cannot be an internal vertex of any weakly toll walk. therefore,
every weakly toll set of $G$ contains $\mathit{Ext}(G)$.
Furthermore, every vertex that is not an extreme vertex lies in a
weakly toll interval between two extreme vertices. We then derive
the following fact:

\begin{proposition}
If $G$ is a proper interval graph, then $\mathit{wtn}(G) =
\mathit{wth}(G) = |\mathit{Ext}(G)|$, where $\mathit{Ext}(G)$ is
the set of extreme vertices of $G$.
\end{proposition}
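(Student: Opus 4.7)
The plan is to establish the chain
$$|Ext(G)| \leq wth(G) \leq wtn(G) \leq |Ext(G)|,$$
which forces all three quantities to coincide. The middle inequality $wth(G) \leq wtn(G)$ is already recorded in the paper, since every weakly toll set is automatically a weakly toll hull set. So the work reduces to proving the two outer inequalities, and all the structural content has in fact been prepared in Theorem~\ref{t31} and Lemma~\ref{l3}.

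For the lower bound $|Ext(G)| \leq wth(G)$, I would argue the standard convex-geometry fact that every weakly toll hull set of $G$ must contain every extreme vertex. Indeed, if some $a \in Ext(G)$ were missing from a weakly toll hull set $S$, then by definition of extreme vertex the set $V(G) \setminus \{a\}$ is weakly toll convex and contains $S$, so $[S]_{\mathit{WT}} \subseteq V(G) \setminus \{a\}$, contradicting $[S]_{\mathit{WT}} = V(G)$. Hence $Ext(G)$ is contained in every weakly toll hull set, which gives $|Ext(G)| \leq wth(G)$.

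For the upper bound $wtn(G) \leq |Ext(G)|$, I would show that $Ext(G)$ is itself a weakly toll set. By Lemma~\ref{l3} the extreme vertices of $G$ are precisely the end simplicial vertices, and the converse part of Theorem~\ref{t31} already establishes that every vertex which is not an end simplicial vertex lies on a weakly toll walk between two end simplicial vertices. Consequently $\mathit{WT}_G(Ext(G)) = V(G)$, so $Ext(G)$ is a weakly toll set, and $wtn(G) \leq |Ext(G)|$.

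No step looks delicate: the proof is essentially a three-line assembly of Theorem~\ref{t31} and Lemma~\ref{l3}. The only point requiring a brief justification is the observation that extreme vertices must lie in every hull set, but this is immediate from the definition of extreme vertex and is implicitly used throughout the paper.
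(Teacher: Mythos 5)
Your proposal is correct and follows essentially the same route as the paper, which justifies the proposition by noting (via the proof of Theorem~\ref{t31} and Lemma~\ref{l3}) that every non-extreme vertex lies in the weakly toll interval between two extreme vertices, together with the standard fact that extreme vertices belong to every hull set. Your write-up merely makes explicit the lower-bound argument that the paper leaves implicit.
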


It is known that, among the trees, only paths are proper interval
graphs. In the following theorem, we determine the weakly toll
number and the weakly toll hull number of any tree.

\begin{theorem}
Let $G$ be a non-trivial tree. Then
$\mathit{wtn}(G)=\mathit{wth}(G)=2$.
\end{theorem}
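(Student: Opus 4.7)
The plan is to use the easy bound $2\leq wth(G)\leq wtn(G)$, valid for any non-trivial connected graph, and to exhibit, for every non-trivial tree $G$, two vertices $u,v$ with $\mathit{WT}_G(u,v)=V(G)$; this will force $wth(G)=wtn(G)=2$.

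If $|V(G)|=2$ then $G=K_2$ and $\{u,v\}=V(G)$ is trivially a weakly toll set, so I focus on the case $|V(G)|\geq 3$. I would choose $u$ and $v$ to be two leaves of $G$. Such a pair exists because every tree has at least two leaves, and no two leaves of a tree on $\geq 3$ vertices can be adjacent (otherwise they would span a connected component of size two). In particular, $u$ and $v$ are non-adjacent. Let $P:u=p_0,p_1,\ldots,p_m=v$ be the unique $u$--$v$ path in $G$, and set $T':=G-u-v$, which is again a tree and contains both $p_1$ and $p_{m-1}$.

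The heart of the argument is the following standard tree-traversal fact: for any tree $T'$ and any (possibly equal) vertices $s,t\in V(T')$ there is a walk in $T'$ from $s$ to $t$ that visits every vertex of $T'$. One concrete way to produce such a walk is to root $T'$ at $s$, order the children at each vertex on the $s$--$t$ path so that the next vertex along that path is visited last, and run a DFS-based Euler tour from $s$, truncating as soon as all descendants of $t$ have been explored and the walk is back at $t$. Applying this with $s=p_1$ and $t=p_{m-1}$ yields a walk $W_0$ in $T'$ from $p_1$ to $p_{m-1}$ that covers every vertex of $V(T')$.

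It then remains to check that $W:u,W_0,v$ is a weakly toll walk between $u$ and $v$ in $G$. The first and last edges $up_1$ and $p_{m-1}v$ exist in $G$ by construction, and the intermediate edges are those of $W_0$, which lie in $T'\subseteq G$. The weakly toll condition at $u$ holds because $u$ is a leaf with unique neighbour $p_1=w_1$, so any internal vertex $w_i$ with $uw_i\in E(G)$ must already equal $p_1=w_1$; symmetrically for $v$ and $w_k=p_{m-1}$. Hence $V(G)\subseteq \mathit{WT}_G(u,v)$, so $\{u,v\}$ is a weakly toll set and $wtn(G)=2$. The only conceptually nontrivial ingredient is the existence of the spanning walk in $T'$ with prescribed endpoints, which is a routine tree-traversal fact; the weakly toll condition itself is essentially automatic from the fact that both $u$ and $v$ are leaves.
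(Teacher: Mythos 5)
Your proof is correct and follows the same approach as the paper: pick two leaves $u,v$ and show every other vertex lies in a weakly toll walk between them, then invoke $2\le wth(G)\le wtn(G)$. The paper dismisses the covering step with ``clearly''; your spanning-walk construction in $G-u-v$, together with the observation that the leaf condition makes the weakly toll constraints at $u$ and $v$ automatic, simply supplies the details.
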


\begin{proof}
Let $a$ and $b$ be two leaves of $G$. Clearly, every vertex of
$V(G)\setminus\{a,b\}$ lies in some weakly toll walk between $a$
and $b$; hence $\mathit{wtn}(G) \leq 2$. Since $G$ is a
non-trivial graph, $\mathit{wtn}(G)=2$. As $\mathit{wth}(G)\leq
\mathit{wtn}(G)$ and $G$ is a non-trivial graph, it follows that
$\mathit{wth}(G)=2$.
\end{proof}

We now need to recall a special representation of interval graphs.
More details can be found in~\citep*{fg}. Let $\mathcal{C}(G)$ be
the set of all maximal cliques of an interval graph $G$. A
\textit{canonical representation} $I$ of $G$ is a total order
$Q_1,\ldots,Q_k$ of the set $\mathcal{C}(G)$ in which for each
vertex $v$ of $G$, the cliques in the set $\mathcal{Q}_v =\{Q \in
\mathcal{C}(G) \mid v\in Q\}$ occur consecutively in the order
(see Figure~\ref{rc}). Maximal cliques $Q_1$ and $Q_k$ are called
\textit{end cliques} of the representation. In addition, for two
maximal cliques $Q_i,Q_j$ with $i \leq j$, we denote by
$G[Q_i,Q_j]$ the subgraph of $G$ induced by $Q_i\cup Q_{i+1}\cup
\ldots\cup Q_j$, and by $I[Q_i,Q_j]$ the canonical representation
$Q_i,Q_{i+1},..,Q_j$ of $G[Q_i,Q_j]$. The {\em clique intersection
graph} $K(G)$ is defined as follows:
$V(K(G))=\mathcal{C}(G)=\{Q_1,\ldots,Q_k\}$, and there is an edge
$Q_iQ_j$ in $K(G)$ if and only if $Q_i\cap Q_j\neq\emptyset$. We
denote by $G_I$ the acyclic spanning subgraph of $K(G)$ consisting
of the path $Q_1,Q_2,\ldots,Q_k$. The graph $G_I$ is intrinsically
associated with the canonical representation $I$. It is a
particular case of a {\em clique tree} of a chordal graph (recall
that interval graphs are chordal). For more details,
see~\citep*{bu,ga}. In the case of interval graphs, clique trees
are paths. Note that in every canonical representation $I$ of
$star_{1,2,2}$, $B_n \; (n>5)$, and the bull graph, $N[v]$ is a
clique that is not an end clique (see Figure~\ref{grafica2}).
Moreover, if $G$ is an interval graph which contains
$star_{1,2,2}$, $B_n \; (n>5)$, or the bull graph as an induced
subgraph and $Q \in \mathcal{C}(G)$ contains $N[v]$, then in each
canonical representation $I$ of $G$, $Q$ is an internal vertex of
$G_I$.

In this context, a given simplicial vertex $v$ in an interval
graph $G$ is an end vertex if there exists some canonical
representation where $N[v]$ is an end clique of this
representation. This can be proved as follows: if $v$ is a
simplicial vertex and $N[v]$ is an end clique of a canonical
representation, then $v$ cannot be the designated vertex in any
configuration depicted in Figure~\ref{grafica2}. Thus, by Theorem
2.2, $v$ is an end vertex.

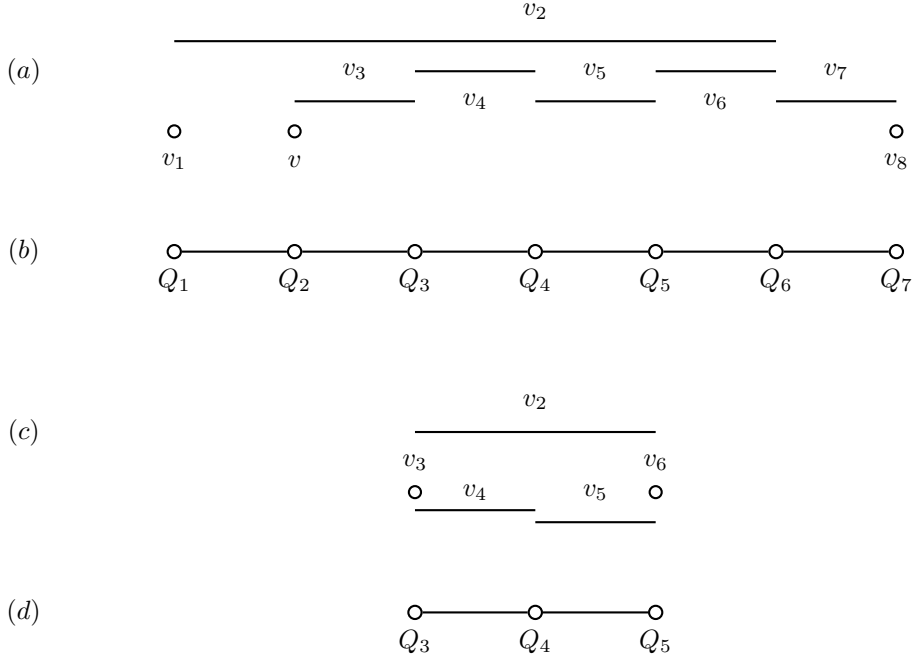
\begin{figure}[ht]
\begin{center}

\begin{tikzpicture}[scale=0.8]

\pgfsetlinewidth{0.8pt}

\tikzset{vertex/.style={circle,  draw, minimum size=5pt, inner
sep=0pt}}

\node [vertex] (Q1) at (-10, -1) [label=below:$Q_1$]{}; \node
[vertex] (Q2) at (-8, -1) [label=below:$Q_2$]{} edge (Q1); \node
[vertex] (Q3) at (-6, -1) [label=below:$Q_3$]{} edge (Q2); \node
[vertex] (Q4) at (-4, -1) [label=below:$Q_4$]{} edge (Q3); \node
[vertex] (Q5) at (-2, -1) [label=below:$Q_5$]{} edge (Q4); \node
[vertex] (Q6) at (0, -1) [label=below:$Q_6$]{} edge (Q5); \node
[vertex] (Q7) at (2, -1) [label=below:$Q_7$]{} edge (Q6);

 \draw (-10,1) circle (1mm);
 \node (h) at (-10,0.5) {$v_1$};
 \node (i) at (-8,0.5) {$v$};
 \node (j) at (2,0.5) {$v_8$};
 \draw (-8,1) circle (1mm);
 \draw (2,1) circle (1mm);
 \draw (-8,1.5)--(-6,1.5);
 \node (k) at (-7,2) {$v_3$};
 \draw (-6,2)--(-4,2);
 \node (l) at (-5,1.5) {$v_4$};
 \draw (-4,1.5)--(-2,1.5);
 \node (m) at (-3,2) {$v_5$};
 \draw (-2,2)--(0,2);
 \node (n) at (-1,1.5) {$v_6$};
 \draw (0,1.5)--(2,1.5);
 \node (o) at (1,2) {$v_7$};
 \draw (-10,2.5)--(0,2.5);
 \node (p) at (-4,3) {$v_2$};

\draw (-6,-5) circle (1mm); \node (w) at (-6,-4.5){$v_3$}; \draw
(-2,-5) circle (1mm); \node (z) at (-2,-4.5) {$v_6$}; \draw
(-6,-4)--(-2,-4); \node (1) at (-4, -3.5) {$v_2$}; \draw
(-6,-5.3)--(-4,-5.3); \node (2) at (-5,-5) {$v_4$}; \node (3) at
(-3,-5) {$v_5$}; \draw (-4,-5.5)--(-2,-5.5);

\node (q) at (-12.5,2) {$(a)$}; \node (q) at (-12.5,-1) {$(b)$};
\node (q) at (-12.5,-4) {$(c)$}; \node (w) at (-12.5,-7) {$(d)$};

\node [vertex] (Q3) at (-6, -7) [label=below:$Q_3$]{}; \node
[vertex] (Q4) at (-4, -7) [label=below:$Q_4$]{} edge (Q3); \node
[vertex] (Q5) at (-2, -7) [label=below:$Q_5$]{} edge (Q4);

\end{tikzpicture}

\end{center}
\caption{(a) Interval model of graph $G=B_8$ (see Figure 1); (b)
clique tree $G_I$, associated with canonical representation
$I=Q_1,Q_2,\ldots,Q_7$; (c) interval model of the graph
$H=G[Q_3,Q_5]$; (d) clique tree $H_{J}$, associated with canonical
representation $J=I[Q_3,Q_5]$.} \label{rc}

\end{figure}

In what follows, we show some properties of the canonical
representation $I$ of an interval graph $G$.

\begin{proposition} \label{p3}
Let $G$ be a connected interval graph, and $I$ a canonical
representation of $G$ such that $Q$ is an end vertex of $G_I$ and
$Q',Q''$ is a pair of vertices of $G_I$ such that $Q \cap Q' \neq
\emptyset$ and $Q \cap Q'' \neq \emptyset$. If $Q, Q', Q''$ appear
in this order (or in reverse order) in $I$, then $Q \cap Q''
\subseteq Q \cap Q'$.
\end{proposition}

\begin{proof}
Assume $Q, Q', Q''$ appear in this order in $I$. Let $x$ be a
vertex in $Q \cap Q''$. Clearly $Q, Q'' \in \mathcal{Q}_x$. Since
$Q' \in I[Q, Q'']$, it follows that $Q' \in \mathcal{Q}_x$. Thus
$x \in Q \cap Q'$, that is, $Q \cap Q''\subseteq Q \cap Q'$.

If the vertices appear in the order $Q'', Q', Q$ in $I$, then $Q'
\in I[Q'', Q]$, and the proof uses the same arguments.
\end{proof}

\begin{proposition}\label{p4}
Let $G$ be a connected interval graph, $I$ a canonical
representation of $G$, and $s_i$ an end simplicial vertex of $G$
such that $N[s_i]$ is end clique of $I$ for $i=1,2$. If there
exist maximal cliques $Q_1, Q_2, Q_3$, and $Q_4$ of $G$ such that
$N[s_1], Q_1, Q_2, Q_3, Q_4, N[s_2]$ appear in this order or in
reverse order in $I$, $Q_1 \cap Q_2 \subseteq N[s_2]$ and $Q_3\cap
Q_4 \subseteq N[s_1]$, then $Q_3 \cap Q_4=Q_1 \cap Q_2$.
\end{proposition}

%\begin{proof} Since $Q_1 \cap Q_2 \subseteq N[s_2]$, $Q_1 \cap Q_2 \neq \emptyset$. By Proposition \ref{p3} as $Q_3,Q_4 \in I[Q_1, N[s_2]]$ results $Q_1 \cap Q_2 \subseteq Q_3 \cap Q_4$. Analogously, we can prove that $Q_3 \cap Q_4 \subseteq Q_1 \cap Q_2$. Therefore $Q_3 \cap Q_4 = Q_1 \cap Q_2$.\end{proof}

\begin{proof}
Let $x\in Q_1\cap Q_2$. Since $Q_1 \cap Q_2 \subseteq N[s_2]$, it
follows that $\{Q_1,Q_2,N[s_2]\}\subseteq \mathcal{Q}_x$. Recall
that the cliques in $\mathcal{Q}_x$ occur consecutively in $I$.
This implies that $Q_3\in \mathcal{Q}_x$ and $Q_4\in
\mathcal{Q}_x$, that is, $x\in Q_3\cap Q_4$. Hence, $Q_1\cap
Q_2\subseteq Q_3\cap Q_4$. Analogously, we can prove that $Q_3
\cap Q_4 \subseteq Q_1 \cap Q_2$. Therefore $Q_3 \cap Q_4 = Q_1
\cap Q_2$.
\end{proof}

\begin{proposition}\label{p2}
Let $G$ be a connected interval graph, $I$ a canonical
representation of $G$, and $s$ an end simplicial vertex of $G$
such that $N[s]$ and $Q$ are the end cliques of $I$. If $G-N[s]$
has $k+1$, $k\geq 1$, connected components, then there exist $k$
vertices $q_1, \ldots, q_k$ in $G-N[s]$ such that:

\begin{enumerate}

\item every $q_i$ is an end simplicial vertex of $G$;

\item $N[s], N[q_1], \ldots, N[q_k]$ appear in this order or in
reverse order in $I$;

\item if $k\geq 2$, then for $i=2,\ldots,k$, $N[q_{i}] \cap N[s]
\subseteq N[q_{i-1}] \cap N[s]$;

\item for $i=1,\ldots,k$, there exists a maximal clique $Q'_i$
such that $N[q_i] \cap Q'_i \subseteq N[s]$, and $N[q_i], Q'_i$
are consecutive vertices in $G_I$;

\item for $i=1,\ldots,k$, there exists another canonical
representation $I_i$ such that $N[q_i]$ and $Q$ are its end
cliques;

\item $G[N[q_k],Q]-N[q_k]$ is a connected graph.

\end{enumerate}

\end{proposition}

\begin{proof} Since $G-N[s]$ is not a connected graph, there exist consecutive vertices $Q_i,Q'_i$ in $G_I$ for $i=1, \ldots, k$ such that $Q_1 \neq N[s]$, $Q_i\cap Q'_i \subseteq N[s]$, and the vertices of $Q_i\setminus N[s]$ and $Q'_i\setminus N[s]$ belong to different connected components of $G-N[s]$, for $i=1,\ldots,k$. Note that, for $i = 1, \ldots, k-1$, clique $Q_{i+1}$ can coincide with $Q'_i$; in addition, if $k=1$, the graph $G-N[s]$ has exactly two connected components.

In order to fix ideas, assume that
$N[s],Q_1,Q'_1,\ldots,Q_k,Q'_k,Q$ appear in this order in $I$. The
graph $G_I-Q_iQ'_i+Q'_iN[s]$ is clearly associated with another
canonical representation $I_i$ of $G$, for $i=1,\ldots,k$. Note
that $Q_i$ is an end clique of $I_i$, so there exist simplicial
end vertices $q_i \in Q_i$, which are end vertices of $G$, for
$i=1,\ldots,k$. As $q_i$ is simplicial, $N[q_i]$ is a maximal
clique of $G$ and $Q_i=N[q_i]$ for $i=1,\ldots,k$. Furthermore, by
Proposition~\ref{p3}, if $k\geq 2$, then $N[q_{i}] \cap N[s]
\subseteq N[q_{i-1}] \cap N[s]$, for $i=2,\ldots,k$.

Let $G_2=G[N[q_k],Q]$. Since $G-N[s]$ has $k+1$ connected
components, it follows that $N[q_k], Q'_k$ are the only cliques of
$G_2$ such that $N[q_k] \cap Q'_k \subseteq N[s]$. Thus
$G_2-N[q_k]$ is a connected graph. Note that $G_2-N[s]$ has
exactly two connected components.

\end{proof}

Two vertices $u$ and $v$ of a graph $G$ are called \textit{twins}
if $N[u]=N[v]$.

\begin{proposition}\label{p1}
Let $G$ be a connected interval graph, $I$ a canonical
representation of $G$, and $s_1$ and $s_2$ end vertices of $G$
such that $N[s_1]$ and $N[s_2]$ are distinct end cliques of $I$.
Then, every vertex of $G-(N[s_1]\cup N[s_2])$ lies in a weakly
toll walk between $s_1$ and $s_2$.
\end{proposition}

\begin{proof} Let $y$ and $w$ be vertices of $G$ such that $y \in N(s_1)$, $w \in N(s_2)$, and $|\mathcal{Q}_v|$ is maximum for each $v \in \{y,w\}$. Note that $y$ may be equal to $w$. Let $x$ be a vertex of $G-(N[s_1]\cup N[s_2])$.

If $y=w$ or $y,w$ are twins then $\mathcal{Q}_y=\mathcal{C}(G)$.
Therefore, $W:s_1,y,x,y,s_2$ is a weakly toll walk between $s_1$
and $s_2$, which captures $x$.

Assume that $y \neq w$ and $y,w$ are not twins. If $y$ is adjacent
to $w$, then there is a maximal clique $Q\in \mathcal{C}(G)$
containing both $y$ and $w$. Thus, $\{N[s_1],Q\}\subseteq
\mathcal{Q}_y$ and $\{Q,N[s_2]\}\subseteq \mathcal{Q}_w$, and the
cliques $N[s_1],Q,N[s_2]$ appear in this order (or in reverse
order) in $I$. Suppose without loss os generality that
$N[s_1],Q,N[s_2]$ appear in this order in $I$. Since the cliques
in $\mathcal{Q}_y$ (or in $\mathcal{Q}_w$) occur consecutively in
$I$, $\mathcal{Q}_y$ contains all the cliques in $I$ from $N[s_1]$
to $Q$, and $\mathcal{Q}_w$ contains all the cliques in $I$ from
$Q$ to $N[s_2]$. Therefore, $\mathcal{Q}_y \cup
\mathcal{Q}_w=\mathcal{C}(G)$. Moreover, $x$ is clearly adjacent
to $y$ or $w$. Thus either $W_1:s_1,y,x,y,w,s_2$, \
$W_2:s_1,y,w,x,w,s_2$, or $W_3:s_1,y,x,w,s_2$ is a weakly toll
walk between $s_1$ and $s_2$, which captures $x$.

Now, suppose that $y$ is not adjacent to $w$. Let
$P:y,x_1,..,x_n,w$ be an induced path between $y$ and $w$ in $G$.
By the choice of $y$ and $w$, $x_i \notin N[s_1] \cup N[s_2]$.

Let $x_0=y$ and $x_{n+1}=w$. Since $x_ix_{i+1}\in E(G)$, let $Q_i$
be a clique of $\mathcal{C}(G)$ containing both $x_i$ and
$x_{i+1}$, for $i=0,\ldots,n$. Since $P$ is induced,
$Q_i\cap\{x_0,\ldots,x_{n+1}\}=\{x_i,x_{i+1}\}$, and this implies
that (a) $Q_i\neq Q_j$, for distinct indices
$i,j\in\{0,\ldots,n\}$, and (b) $Q_0,Q_1,\ldots,Q_n$ or
$Q_n,Q_{n-1},\ldots,Q_0$ is a total order. Hence,
$N[s_1],Q_0,\ldots,Q_n,N[s_2]$ appear in this order (or in reverse
order) in $I$. Suppose without loss of generality that
$N[s_1],Q_0,\ldots,Q_n,N[s_2]$ appear in this order in $I$. Note
that (i) $\mathcal{Q}_y$ contains all the cliques in $I$ from
$N[s_1]$ to $Q_0$, (ii) $\mathcal{Q}_{x_i}$ contains all the
cliques in $I$ from $Q_i$ to $Q_{i+1}$, for $i=0,\ldots,n$, and
(iii) $\mathcal{Q}_w$ contains all the cliques in $I$ from $Q_n$
to $N[s_2]$. Therefore, $\mathcal{Q}_y \cup \mathcal{Q}_w \cup
\bigcup_{i=1}^n \mathcal{Q}_{x_i}=\mathcal{C}(G)$.

To conclude the proof, observe that $x=x_i$ or $x$ must be
adjacent to $y$ or $w$ or $x_i$ for some $1 \leq i \leq n$.
Therefore, $s_1,P,s_2$ or $s_1,y,x,P,s_2$ or $s_1,P,x,w,s_2$ or
$s_1,y,x_1,\ldots,x_i,x,x_i,\ldots,x_n,w,s_2$ (for some $i \in
\{1,\ldots,n\}$) is a weakly toll walk between $s_1$ and $s_2$,
which captures $x$.

\end{proof}

\begin{proposition}\label{l1}
Let $G$ be a connected interval graph, let $s_1 \neq s_2$ be two
non-twin simplicial vertices of $G$ such that $G-N[s_1]$ and
$G-N[s_2]$ are connected graphs, and let $S_i=\{s_i\} \cup \{x \in
V(G) : x \ \text{is a twin of} \ s_i\}$ for $i=1,2$. Then,

\begin{enumerate}

\item[$(1)$] $S_1\cup S_2$ is a weakly toll set of $G$;

\item[$(2)$] if there exists $v \notin N[s_1]$ such that for some
$y \in N[s_1]$, $\mathcal{Q}_v \varsubsetneq \mathcal{Q}_y$, and
there exists an induced path $P$ between $y$ and $s_2$ with $P
\cap N[v] =\{y\}$, then $S=S_1 \cup \{s_2,v\}$ is a weakly toll
set of $G$;

\item[$(3)$] if there exist $v \notin N[s_1]$ and $w \notin
N[s_2]$ such that for some $y \in N[s_1]$ and $z \in N[s_2]$,
$\mathcal{Q}_v \varsubsetneq \mathcal{Q}_y$ and $\mathcal{Q}_w
\varsubsetneq \mathcal{Q}_z$, and there exist induced paths $P$
between $y$ and $s_2$ with $P \cap N[v] =\{y\}$ and $P_1$ between
$z$ and $s_1$ with $P_1 \cap N[w] =\{z\}$, then $S=\{s_1, v, s_2,
w\}$ is a weakly toll set of $G$;

\item[$(4)$] $2 \leq \mathit{wtn}(G) \leq |S_1 \cup S_2|$.

\end{enumerate}

\end{proposition}

\begin{proof}\hspace*{1cm}

If $G-N[s_1]$ and $G-N[s_2]$ are connected graphs then, in every
canonical representation of $G$, $N[s_1]$ and $N[s_2]$ are end
cliques.

\begin{enumerate}

\item[$(1)$]

Let $x\in V(G)\setminus (N[s_1]\cup N[s_2])$. By Proposition
\ref{p1}, $x$ lies in a weakly toll walk between $s_1$ and $s_2$.
Now, for $i \in \{1,2\}$, let $x \in N[s_i] \setminus S_i$. Since
$x \notin S_i$ and $G-N[s_i]$ is a connected graph, there exists
at least a vertex $y \in N(x)\setminus N[s_i]$ and an induced path
$P: y, \ldots, s_j$ avoiding the neighbors of $s_i$. Note that no
vertex of $P$ is a neighbor of $s_i$. Then $s_i,x,y,\ldots,s_j$ is
a walk between $s_i$ and $s_j$ which contains $x$ as internal
vertex; in addition, this walk contains an induced path $P_1$
between $s_i$ and $s_j$ which contains $x$ as an internal vertex.
Hence $P_1$ is a weakly toll walk between $s_i$ and $s_j$
containing $x$ as an internal vertex. Thus $S_1 \cup S_2$ is a
weakly toll set of $G$ and $\mathit{wtn}(G)\leq |S_1 \cup S_2|$.

\item[$(2)$]

Let $P=y,y_1,\ldots,y_n,s_2$ such that $P \cap N[v]=\{y\}$. Since
$s_1 \neq s_2$, $v \notin N[s_1]$, and $\mathcal{Q}_v$ is properly
contained in $\mathcal{Q}_y$, we have that $y_1 \notin N[s_1]$.
Let $s$ be a twin of $s_2$. Since $y_i \notin N[v]$, the walk
$s_1,y,y_1,\ldots,y_n,s,y_n,y_{n-1},\ldots,y_1,y,v$ is a weakly
toll walk between $s_1$ and $v$, which captures $s$. Hence $S=S_1
\cup \{s_2,v\}$ is a weakly toll set and $\mathit{wtn}(G)\leq
|S|$.

\item[$(3)$]

Every vertex of $V(G)\setminus (S_1 \cup S_2)$ is contained in the
weakly toll interval of $\{s_1,s_2\}$, every vertex of $S_2$ is
contained in the weakly toll interval of $\{s_1,v\}$, and every
vertex of $S_1$ is contained in the weakly toll interval of
$\{s_2,w\}$. Hence $S=\{s_1, v, s_2, w\}$ is a weakly toll set and
$\mathit{wtn}(G)\leq |S|$.

\item[$(4)$] From the above, $2 \leq \mathit{wtn}(G) \leq |S_1
\cup S_2|$.

\end{enumerate}
\end{proof}

By the above proposition, the weakly toll number of an interval
graph having two simplicial vertices $s_1$ and $s_2$, which are
always in end cliques of every representation, is a number between
two and the number of twins of $s_1$ and $s_2$ plus two. The
following example is elucidative.

\begin{figure}[ht]
\begin{center}
\begin{tikzpicture}[scale=0.8]
\node[draw,circle] (2) at (-1.5,0) {$1$}; \node[draw,circle] (3)
at (0,0) {$3$}; \node[draw,circle] (7) at (1.5,0) {$4$};
\node[draw,circle] (9) at (-0.75,1) {$2$}; \draw
(3)--(2)--(9)--(3)--(7);
\end{tikzpicture}
\end{center}
\caption{Interval graph with weakly toll number three. \label{5}}
\end{figure}
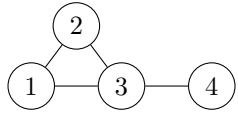

In Figure~\ref{5}, the graph has weakly toll number three. Observe
that there is a weakly toll walk $W$ between $1$ and $4$, namely
$W:1,3,4$, which captures vertex $3$, but there is no weakly toll
walk between $1$ and $4$ which captures vertex 2. Hence,
$\{1,2,4\}$ is a weakly toll set of $G$.

In the following theorem, we show that an interval graph with no
end simplicial vertex having a twin has weakly toll number at most
three and weakly toll hull number equal to two.

\begin{theorem}\label{teo} Let $G$ be a connected interval graph. Let $s_1$ be
an end vertex of $G$ such that $N[s_1]$ is an end clique of a
canonical representation $I$ of $G$. If $G-N[s_1]$ is not a
connected graph, then $\mathit{wtn}(G)\leq 3$ and
$\mathit{wth}(G)=2$.
\end{theorem}

\begin{proof}First, we will prove that $wtn(G)\leq 3$.

Let $s_2$ be an end vertex of $G$ such that $N[s_2]$ is the other
end clique of the canonical representation $I$.

By Proposition \ref{p1}, every vertex $x \notin N[s_1] \cup
N[s_2]$ lies in a weakly toll walk between $s_1$ and $s_2$.

On the other hand, if $x \in N[s_1] \cap N[s_2]$ then $x$ lies in
the weakly toll walk $W=s_1,x,s_2$, between $s_1$ and $s_2$.

Thus we just need to study vertices $x \in (N[s_1] \cup N[s_2])\setminus
(N[s_1] \cap N[s_2])$.

Assume that $G-N[s_1]$ has at least two connected components. By
Proposition \ref{p2}, if $G-N[s_1]$ has $k+1$ connected components,
there exist end simplicial vertices $q_1,\ldots, q_k$ of $G$ and $Q'_1,\ldots,Q'_k$ maximal cliques of $G$ such that $N[q_i],Q'_i$ are consecutive vertices in $I$, $N[q_{i+1}] \cap N[s_1] \subseteq N[q_i] \cap
N[s_1]$, and $N[q_i] \cap Q'_i \subseteq N[s_1]$ for
$i=1,\ldots,k-1$.

We will show that $S=\{s_1,q_1,s_2\}$ is a weakly toll set of $G$.

\begin{figure}[ht!]

\centering

\begin{tikzpicture}[scale=1]

\pgfsetlinewidth{1pt}

\tikzset{vertex/.style={circle,  draw, minimum size=5pt, inner
sep=1.5pt}}

% G
\node (13) at (-2,0) {$G$};

% vertices
\node [vertex] (a) at (0, 0) [label=left:$a$]{}; \node [vertex]
(b) at (1, 1) [label=above:$b$]{}; \node [vertex] (c) at (3, -1)
[label=below:$q_2$]{}; \node [vertex] (d) at (2, 0)
[label=below:$d$]{}; \node [vertex] (e) at (4, 0)
[label=below:$e$]{}; \node [vertex] (q1) at (2, 1)
[label=above:$q_1$]{}; \node [vertex] (s1) at (1, -1)
[label=below:$s_1$]{}; \node [vertex] (s2) at (5, -1)
[label=below:$s_2$]{};

% edges
\draw [] (a) to (b); \draw [] (a) to (d); \draw [] (a) to (s1);
\draw [] (b) to (q1); \draw [] (b) to (d); \draw [] (c) to (d);
\draw [] (d) to (e); \draw [] (d) to (q1); \draw [] (d) to (s1);
\draw [] (e) to (s2);

\node [vertex] (s1) at (0, -3) [label=above:$s_1$]{}; \node
[vertex] (q1) at (4, -3) [label=above:$q_1$]{}; \node [vertex]
(q2) at (6, -3) [label=above:$q_2$]{}; \node [vertex] (s2) at (10,
-3) [label=above:$s_2$]{}; \node (A) at (3,-3.8) {$d$}; \draw
(0,-4)--(8,-4); \node (B) at (1,-4.3) {$a$}; \draw
(0,-4.5)--(2,-4.5); \node (C) at (3,-4.8) {$b$}; \draw
(2,-5)--(4,-5); \node (D) at (9,-4.3) {$e$}; \draw
(8,-4.5)--(10,-4.5);

% G_I
\node (13) at (-2,-7) {$G_I$};

% vertices
\node [vertex] (1) at (0, -7) {$a,d,s_1$}; \node [vertex] (2) at
(2, -7) {$a,b,d$}; \node [vertex] (3) at (4, -7) {$b,d,q_1$};
\node [vertex] (4) at (6, -7) {$d,q_2$}; \node [vertex] (5) at (8,
-7) {$d,e$}; \node [vertex] (6) at (10, -7) {$e,s_2$};

% edges
\draw [] (1) to (2); \draw [] (2) to (3); \draw [] (3) to (4);
\draw [] (4) to (5); \draw [] (5) to (6);

\end{tikzpicture}
\caption{The graph $G-N[s_1]$ is not connected, but $G-N[s_2]$ is
connected. Note that $G_2=G[N[q_2], N[s_2]]=G[\{q_2,d,e,s_2\}]$
and $Q'_2=\{d,e\}$. By (1) in Proposition~\ref{l1}, $q_2,d,e,s_2$
is a weakly toll walk, and since $N[q_2]\cap Q'_2\subseteq
N[s_1]$, it follows that $d \in N[s_1]$ and $s_1,d,q_2,d,e,s_2$ is
a weakly toll walk which captures $d$, $e$ and $q_2$. In addition,
$G_1=G[N[s_1],N[q_1]]=G[\{s_1,a,b,d,q_1\}]$. Since $G_1-N[q_1]$ is
a connected graph, by (1) in Proposition ~\ref{l1}, $s_1,a,b,q_1$
is a weakly toll walk which captures $a$ and
$b$.\label{fig:case1}}

\end{figure}
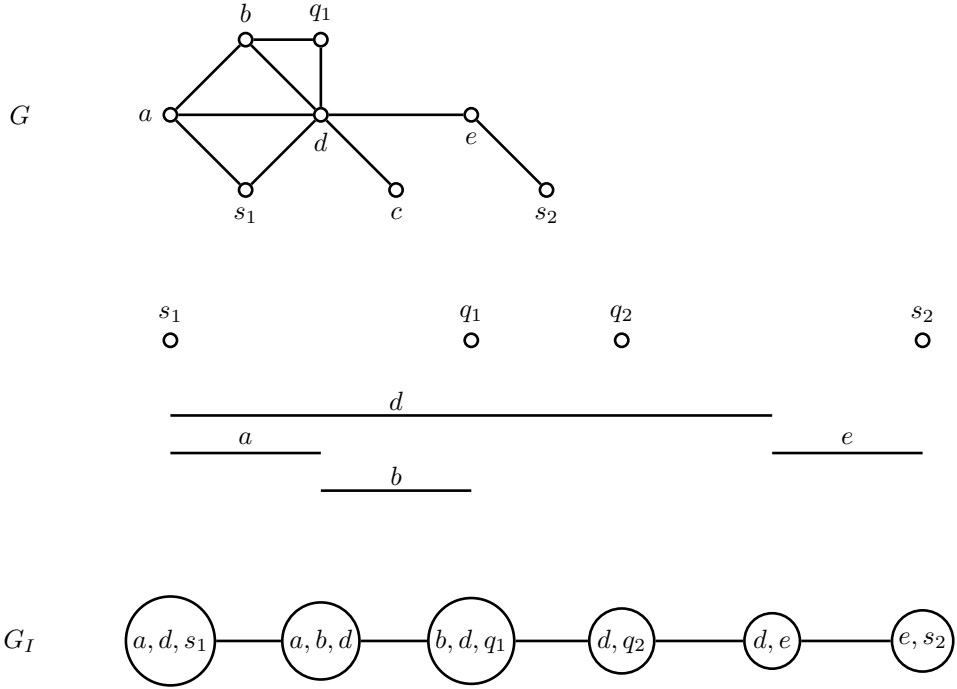

\begin{figure}[ht]

\centering{}

\begin{tikzpicture}[scale=1]

\pgfsetlinewidth{1pt}

\tikzset{vertex/.style={circle,  draw, minimum size=5pt, inner
sep=1.5pt}}

% G
\node (13) at (-2,0) {$G$};

% vertices
\node [vertex] (a) at (0, 0) [label=left:$a$]{}; \node [vertex]
(b) at (1, 1) [label=above:$b$]{}; \node [vertex] (c) at (3, -1)
[label=below:$c$]{}; \node [vertex] (d) at (2, 0)
[label=below:$d$]{}; \node [vertex] (e) at (4, 0)
[label=below:$e$]{}; \node [vertex] (f) at (6, 0)
[label=right:$f$]{}; \node [vertex] (q1) at (2, 1)
[label=above:$q_1$]{}; \node [vertex] (s1) at (1, -1)
[label=below:$s_1$]{}; \node [vertex] (q1') at (5, 1)
[label=above:$q'_1$]{}; \node [vertex] (s2) at (5, -1)
[label=below:$s_2$]{};

% edges
\draw [] (a) to (b); \draw [] (a) to (d); \draw [] (a) to (s1);
\draw [] (b) to (q1); \draw [] (b) to (d); \draw [] (c) to (d);
\draw [] (d) to (e); \draw [] (d) to (q1); \draw [] (d) to (s1);
\draw [] (e) to (f); \draw [] (e) to (q1'); \draw [] (e) to (s2);
\draw [] (f) to (q1'); \draw [] (f) to (s2);

\node [vertex] (s1) at (0, -3) [label=above:$s_1$]{}; \node
[vertex] (q1) at (4, -3) [label=above:$q_1$]{}; \node [vertex] (c)
at (6, -3) [label=above:$c$]{}; \node [vertex] (q'1) at (10, -3)
[label=above:$q'_1$]{}; \node [vertex] (s2) at (12, -3)
[label=above:$s_2$]{};

\node (A) at (3,-3.8) {$d$}; \draw (0,-4)--(8,-4); \node (B) at
(1,-4.3) {$a$}; \draw (0,-4.5)--(2,-4.5); \node (C) at (3,-4.8)
{$b$}; \draw (2,-5)--(4,-5); \node (D) at (10,-4.3) {$e$}; \draw
(8,-4.5)--(12,-4.5); \node (F) at (11,-5.1) {$f$}; \draw
(10,-5.4)--(12,-5.4);

% G_I
\node (13) at (-2,-7) {$G_I$};

% vertices
\node [vertex] (1) at (0, -7) {$a,d,s_1$}; \node [vertex] (2) at
(2, -7) {$a,b,d$}; \node [vertex] (3) at (4, -7) {$b,d,q_1$};
\node [vertex] (4) at (6, -7) {$d,c$}; \node [vertex] (5) at (8,
-7) {$d,e$}; \node [vertex] (6) at (10, -7) {$e,f,q'_1$}; \node
[vertex] (7) at (12, -7) {$e,f,s_2$};

% edges
\draw [] (1) to (2); \draw [] (2) to (3); \draw [] (3) to (4);
\draw [] (4) to (5); \draw [] (5) to (6); \draw [] (6) to (7);

\end{tikzpicture}

\caption{The walk $s_1,d,b,q_1,d,c,d,e,q'_1,e,s_2$ is a weakly
toll walk which captures $V(G)\setminus \{a,f\}$. The walk
$s_1,a,b,q_1$ is a weakly toll walk which captures $a$ and $b$.
Finally, the walk $s_1,d,e,f,e,d,q_1$ is a weakly toll walk which
captures $f$.\label{fig:case2}}

\end{figure}
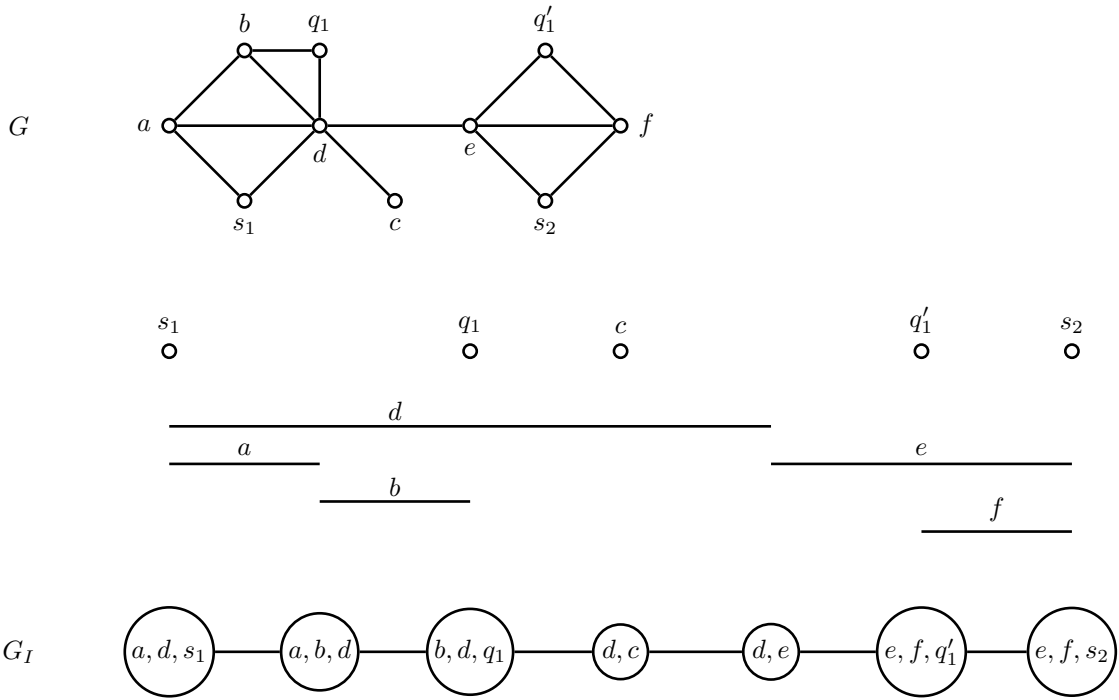

Let $w_1$ and $w_n$ be vertices in $N(s_1)$ and $N(s_2)$,
respectively, such that $|\mathcal{Q}_{w_1}|$ and
$|\mathcal{Q}_{w_n}|$ are maximum. Clearly, $q_1 \in N(w_1)$.

First, suppose that $x \in N(s_1)$. If $x \in N(q_1)$ then
$s_1,x,q_1$ is a weakly toll walk.

Assume that $x \notin N(q_1)$.  If $w_1$ and $w_n$ are twins, then
$s_2,w_1,x,w_1,q_1$ is a weakly toll walk.

Assume now that $w_1$ and $w_n$ are not twins. Note that $q_1 \notin
N(w_n)$. If $\mathcal{Q}_{w_1} \cap \mathcal{Q}_{w_n} \neq
\emptyset$, then the walk $W = s_2,w_n,w_1,x,w_1,q_1$ is a weakly toll walk.

Suppose that $\mathcal{Q}_{w_1} \cap \mathcal{Q}_{w_n} =
\emptyset$. Thus, there exists a connected component $C$ of $G\setminus
(N[s_1] \cup N[s_2])$ whose vertices are in $G[Q'_k,Q]$, where
$Q=N[s_2]$ if $G-N[s_2]$ is a connected graph, or, by Proposition
\ref{p2}, $Q \neq N[s_2]$ if $G-N[s_2]$ is not a
connected graph; in addition, there exist vertices
$a \in N(w_1) \cap V(C)$ and $b \in N(w_n) \cap V(C)$.
Let $a,y_1, \ldots, y_m, b$ be an induced path
between $a$ and $b$. Let $i$ and $j$ be the first and the last
indices such that $y_i$ is adjacent to $w_n$ and $y_j$ is adjacent
to $w_1$. Since $w_1$ is not adjacent to $w_n$, $j\leq i$. Thus, the walk
$q_1, w_1,x,w_1,y_j, y_{j+1}, \ldots, y_i,w_n,s_2$ is a weakly
toll walk.

Now, suppose that $x \in N(s_2)$.
If $w_1$ and $w_n$ are twins, then
$s_1,w_n,x,w_n,q_1$ is a weakly toll walk.

Consider now the case in which $w_1$ and $w_n$ are not twins.
In this situation, $q_1 \notin N(w_n)$.
If $\mathcal{Q}_{w_1} \cap \mathcal{Q}_{w_n} \neq
\emptyset$ then $s_1,w_1,w_n,x,w_n,w_1,q_1$ is a weakly toll walk.

Suppose now that $\mathcal{Q}_{w_1} \cap \mathcal{Q}_{w_n} =
\emptyset$. Thus, as exposed above, there exists a connected
component $C$ of $G\setminus (N[s_1]\cup N[s_2])$ whose vertices are
in $G[Q'_k,Q]$; in addition, there exist vertices
$a \in N(w_1) \cap V(C)$ and $b \in N(w_n) \cap V(C)$.
Let $a,y'_1, \ldots,y'_p, b$ an induced path
between $a$ and $b$. Let $i$ and $j$ be the first and the last
indices such that $y'_i$ is adjacent to $w_n$ and $y'_j$ is adjacent
to $w_1$. Since $w_1$ is not adjacent to $w_n$, $j\leq i$. Thus, the walk
$q_1,y'_j, \ldots, y'_i, w_n,x,w_n,y'_i, \ldots, y'_j,w_1,s_1$ is
a weakly toll walk.

From the above, $S=\{s_1,s_2,q_1\}$ is a weakly toll set of $G$.
Figures ~\ref{fig:case1}, \ref{fig:case2} illustrate the idea of
the proof. Therefore, $wtn(G)\leq 3$.

\medskip

In order to prove that $wth(G)=2$, first observe that, by
Proposition \ref{p1}, every end simplicial vertex, distinct from
$s_1$ and $s_2$, lies in a weakly toll walk between $s_1$ and
$s_2$.

Let $x$ be a vertex of $G$ that is not an end simplicial vertex.
Since $G$ is an interval graph, every vertex $x$ of $G$ that is
not an end simplicial vertex lies in a tolled walk between two end
simplicial vertices, and every end simplicial vertex lies in a
weakly toll walk between $s_1$ and $s_2$; thus, it is possible to
build a weakly toll walk between $s_1$ and $s_2$ which captures
$x$ using vertices of both walks.

Therefore, $S=\{s_1,s_2\}$ is a weakly toll hull set of $G$, i.e.,
$wth(G)=2$.
\end{proof}

In the remainder of this section, we consider an interval graph
$G$, and study the weakly toll number of $G$. Let $s_1$ and $s_2$
be two end simplicial vertices of $G$ such that $N[s_1]$ and
$N[s_2]$ are the end cliques of a canonical representation $I$ of
$G$.

If $G-N[s_i]$ is a connected graph for $i=1,2$, then, by
Proposition~\ref{l1}, it follows that
$$\mathit{wtn}(G)\leq |S_1\cup S_2|.$$

%$\mathit{wtn}(G)=
%\left\{%
%\begin{array}{lll}
%     2; \\
%     |S_1|+2; \\
%     4; \\
%     |S_1 \cup S_2|. \\
%\end{array}%
%\right.$

Suppose that $G-N[s_i]$ is not a connected graph for some $i \in
\{1,2\}$. By Theorem \ref{teo}, $S=\{s_1,q_1,s_2\}$ is a weakly
toll set of $G$.

Hence $wtn(G)\leq 3$.

From the above, we obtain the following result.

\begin{corollary}
Let $G$ be an interval graph, $s_1,s_2$ be two end simplicial
vertices of $G$, which are not twins, such that $N[s_1]$ and
$N[s_2]$ are end cliques of a canonical representation $I$ of $G$,
and $S_i=\{s_i\} \cup \{x \in V(G) : x \ \text{is a twin of} \
s_i\}$ for $i=1,2$. Then, $2\leq \mathit{wtn}(G) \leq |S_1 \cup
S_2|.$
\end{corollary}

\section{Conclusions}

In this work, we introduced a new graph convexity based on the
concept of weakly toll walks, which generalize induced paths, and
showed how such a convexity gives rise to a new structural
characterization of proper interval graphs. Also, we found bounds
for the weakly toll number and the weakly toll hull number of an
arbitrary interval graph.

We propose a further study of these two invariants in general
graphs, namely, determining the graphs $G$ for which
$\mathit{wtn}(G)=\mathit{wth}(G)=|\mathit{Ext}(G)|$. Other
research direction is the study of the Carath\'eodory number, the
Radon number, and the Helly number in the context of the weakly
toll convexity. Finally, characterizing weakly toll convex sets in
graph products is also an interesting open problem.

\if 10

\fi

%% else use the following coding to input the bibitems directly in the
%% TeX file.

%%%%%%\begin{thebibliography}{00}
%%%%%%
%%%%%%%% \bibitem{label}
%%%%%%%% Text of bibliographic item
%%%%%%
%%%%%%\bibitem{}
%%%%%%
%%%%%%\end{thebibliography}

\acknowledgements \label{sec:ack} M. C. Dourado is partially
supported by Conselho Nacional de Desenvolvimento Cient\'{\i}fico
e Tecnol\'{o}gico (CNPq), Brazil, Grant number 305404/2020-2 and
FAPERJ (211.753/2021). F. Protti is partially supported by CNPq
(304117/2019-6) and FAPERJ (201.083/2021).

%\nocite{*}
\bibliographystyle{abbrvnat}
% use the following instead if you encounter problems
%\bibliographystyle{alpha}
%\bibliographystile{plain}
\bibliography{bib-dmtcs}

\begin{thebibliography}{17}
\providecommand{\natexlab}[1]{#1}
\providecommand{\url}[1]{\texttt{#1}}
\expandafter\ifx\csname urlstyle\endcsname\relax
  \providecommand{\doi}[1]{doi: #1}\else
  \providecommand{\doi}{doi: \begingroup \urlstyle{rm}\Url}\fi

\bibitem[Alc\'on et~al.(2015)Alc\'on, Bre\v{s}ar, Gologranc, Gutierrez,
  \v{S}umenjak, Peterin, and Tepeh]{lm}
L.~Alc\'on, B.~Bre\v{s}ar, T.~Gologranc, M.~Gutierrez, T.~K. \v{S}umenjak,
  I.~Peterin, and A.~Tepeh.
\newblock Toll convexity.
\newblock \emph{European Journal of Combinatorics}, 46:\penalty0 161--175,
  2015.

\bibitem[Buneman(1974)]{bu}
P.~Buneman.
\newblock A characterization of rigid circuit graphs.
\newblock \emph{Discrete Mathematics}, 9:\penalty0 205--212, 1974.

\bibitem[C\'aceres et~al.(2008)C\'aceres, Marquez, and Puertas]{caceres}
J.~C\'aceres, A.~Marquez, and M.~L. Puertas.
\newblock Steiner distance and convexity in graphs.
\newblock \emph{European Journal of Combinatorics}, 29:\penalty0 726--736,
  2008.

\bibitem[Dirac(1961)]{d}
G.~Dirac.
\newblock On rigid circuit graphs.
\newblock \emph{Abh. Math. Sem. Univ. Hamburg}, 25:\penalty0 71--76, 1961.

\bibitem[Dourado et~al.(2022)Dourado, Gutierrez, Protti, Sampaio, and
  Tondato]{dpsa}
M.~C. Dourado, M.~Gutierrez, F.~Protti, R.~Sampaio, and S.~Tondato.
\newblock Characterizations of graph classes via convex geometries: A survey.
\newblock \emph{Submitted manuscript}, 2022.
\newblock Available at https://arxiv.org/abs/2203.15878.

\bibitem[Dragan et~al.(1999)Dragan, Nicolai, and Brandst\"adt]{dnb}
F.~Dragan, F.~Nicolai, and A.~Brandst\"adt.
\newblock Convexity and hhd-free graphs.
\newblock \emph{SIAM J. Discrete Math}, 12:\penalty0 119--135, 1999.

\bibitem[Duchet(1988)]{du}
P.~Duchet.
\newblock Convex sets in graphs ii: Minimal path convexity.
\newblock \emph{J. Combin. Theory Ser. B}, 44:\penalty0 307--316, 1988.

\bibitem[Farber and Jamison(1986)]{fj}
M.~Farber and R.~E. Jamison.
\newblock Convexity in graphs and hypergraphs.
\newblock \emph{SIAM J. on Algebraic and Discrete Methods}, 7:\penalty0
  433--444, 1986.

\bibitem[Fulkerson and Gross(1965)]{fg}
D.~R. Fulkerson and O.~A. Gross.
\newblock Incidence matrices and interval graphs.
\newblock \emph{Pacific Journal of Mathematics}, 15\penalty0 (3):\penalty0
  835--855, 1965.

\bibitem[Gavril(1974)]{ga}
F.~Gavril.
\newblock The intersection graphs of subtrees in trees are exactly the chordal
  graphs.
\newblock \emph{Journal of Combinatorial Theory, Series B}, 16:\penalty0
  47--56, 1974.

\bibitem[Gimbel(1988)]{g}
J.~Gimbel.
\newblock End vertices in interval graphs.
\newblock \emph{Discrete Applied Mathematics}, 21:\penalty0 257--259, 1988.

\bibitem[Gutierrez et~al.(2023)Gutierrez, Protti, and Tondato]{gpt}
M.~Gutierrez, F.~Protti, and S.~Tondato.
\newblock Convex geometries over induced paths with bounded length.
\newblock \emph{Discrete Mathematics}, 346 (1):\penalty0 113133, 2023.

\bibitem[Krein and Milman(1940)]{km}
M.~Krein and D.~Milman.
\newblock On extreme points of regular convex sets.
\newblock \emph{Studia Mathematica}, 9:\penalty0 133--138, 1940.

\bibitem[Lekkerkerker and Boland(1962)]{lb}
C.~G. Lekkerkerker and J.~C. Boland.
\newblock Representation of a finite graph by a set of intervals on the real
  line.
\newblock \emph{Fundamenta Mathematicae}, 51:\penalty0 45--64, 1962.

\bibitem[Pelayo(2013)]{p}
I.~M. Pelayo.
\newblock \emph{Geodesic Convexity in Graphs}.
\newblock Springer, New York, 2013.

\bibitem[Roberts(1969)]{r}
F.~S. Roberts.
\newblock Indifference graphs.
\newblock In F.~Harary, editor, \emph{Proof Techniques in Graph Theory}, pages
  139--146. Academic Press, New York, 1969.

\bibitem[van~de Vel(1993)]{v}
M.~L.~J. van~de Vel.
\newblock \emph{Theory of Convex Structures}.
\newblock North Holland, Amsterdam, 1993.

\end{thebibliography}
%\label{sec:biblio}

\end{document}